\let\pa\partial  
\let\na\nabla  
\let\eps\varepsilon  
\newcommand{\N}{{\mathbb N}}  
\newcommand{\R}{{\mathbb R}} 
\newcommand{\diver}{\operatorname{div}}
\newcommand{\dom}{{{\mathbb T}^d}}
\newtheorem{theorem}{Theorem}
\newtheorem{lemma}[theorem]{Lemma}
\newtheorem{proposition}[theorem]{Proposition}
\newtheorem{remark}[theorem]{Remark}
\begin{document}  

\title[Nonlocal cross-diffusion systems]{Nonlocal 
cross-diffusion systems \\ for multi-species populations and networks}

\author[A. J\"ungel]{Ansgar J\"ungel}
\address{Institute for Analysis and Scientific Computing, Vienna University of  
	Technology, Wiedner Hauptstra\ss e 8--10, 1040 Wien, Austria}
\email{juengel@tuwien.ac.at} 

\author[S. Portisch]{Stefan Portisch}
\address{Institute for Analysis and Scientific Computing, Vienna University of  
	Technology, Wiedner Hauptstra\ss e 8--10, 1040 Wien, Austria}
\email{stefan.portisch@tuwien.ac.at} 

\author[A. Zurek]{Antoine Zurek}
\address{Institute for Analysis and Scientific Computing, Vienna University of  
	Technology, Wiedner Hauptstra\ss e 8--10, 1040 Wien, Austria}
\email{antoine.zurek@tuwien.ac.at}

\date{\today}

\thanks{The authors have been partially supported by the Austrian Science Fund (FWF), 
grants P30000, P33010, F65, and W1245, and by the multilateral project of the
Austrian Agency for International Cooperation in Education and Research
(OeAD), grant MULT 11/2020. This work has received funding from the European 
Research Council (ERC) under the European Union's Horizon 2020 research and 
innovation programme, ERC Advanced Grant no.~101018153.} 

\begin{abstract}
Nonlocal cross-diffusion systems on the torus, arising
in population dynamics and neuroscience, are analyzed. The global existence
of weak solutions, the weak-strong uniqueness, and the
localization limit are proved. The kernels are assumed to be 
positive definite and in detailed balance. The proofs are based on entropy estimates
coming from Shannon-type and Rao-type entropies, while the weak-strong uniqueness 
result follows from the relative entropy method. The existence and uniqueness
theorems hold for nondifferentiable kernels. The associated local cross-diffusion
system, derived in the localization limit, is also discussed.
\end{abstract}

\keywords{Cross diffusion, neural network dynamics, entropy method, localization limit,
global existence of solutions, weak-strong uniqueness.}  
 
\subjclass[2000]{35K40, 35K55, 35Q92, 68T07, 92B20.}

\maketitle


\section{Introduction}

In this paper, we consider the following nonlocal cross-diffusion system:
\begin{equation}\label{1.eq}
  \pa_t u_i - \sigma\Delta u_i  = \diver(u_i\na p_i[u]), \quad t>0, \quad
	u_i(0)=u_i^0\quad\mbox{in }\dom,\ i=1,\ldots,n, 
\end{equation}
where $\sigma>0$ is the diffusion coefficient, $\dom$ is
the $d$-dimensional torus ($d\ge 1$) and $p_i[u]$ is a nonlocal operator given by
\begin{equation}\label{1.nonloc}
   p_i[u](x) = \sum_{j=1}^n\int_\dom K_{ij}(x-y)u_j(y)dy, \quad i=1,\ldots,n,
\end{equation}
$K_{ij}:\dom\to\R$ are the kernel functions (extended periodically to $\R^d$), 
and $u=(u_1,\ldots,u_n)$ is the solution vector. 

Following \cite{GHLP21,PL19}, 
in the case $K_{ij} = a_{ij} K$ with $a_{ij} \in \R$, this model describes the 
dynamics of a population with $n$ species. 
Here, each species can detect other species over a spatial neighborhood
by nonlocal sensing, described by the kernel function $K$.
The coefficient $a_{ij}$ is a measure of the strength of attraction
(if $a_{ij} <0$) or repulsion (if $a_{ij} > 0$)
of the $i$th species to/from the $j$th species.
A typical choice of $K$ is the characteristic function $\mathrm{1}_{B}$ 
of a ball $B$ centered at the origin.
The authors of \cite{GHLP21} proved the local existence of a unique 
(strong) solution to \eqref{1.eq}--\eqref{1.nonloc} for $d\geq 1$
under the condition that $K$ is twice differentiable, 
and this solution can be extended globally in one space dimension.
However, the condition on $K$ excludes the case $K=\mathrm{1}_B$.
One objective of this paper is to extend the results of \cite{GHLP21}
by proving the global existence of weak solutions to \eqref{1.eq}--\eqref{1.nonloc}
and a weak-strong uniqueness result in any space dimension and for 
nondifferentiable kernels $K_{ij}$. As in \cite{GHLP21}, we consider equations
\eqref{1.eq} on the torus; see Remark \ref{rem.domain} for the whole space case
or bounded domains.

Another motivation of the present research comes from the work \cite{CDJ19}, 
where the model \eqref{1.eq}--\eqref{1.nonloc} was rigorously derived from
interacting many-particle systems in a mean-field-type limit. As a by-product
of the limit, the local existence of smooth solutions to \eqref{1.eq}--\eqref{1.nonloc}
could be shown under the assumption that $K_{ij}$ is smooth. Under the same condition,
the so-called localization limit was proved, i.e., if $K_{ij}$ converges to the 
delta distribution times some factor $a_{ij}$, the solution to the nonlocal 
system \eqref{1.eq}--\eqref{1.nonloc} converges to a solution to the model
\eqref{1.eq} with
\begin{equation}\label{1.loc}
  p_i[u] = \sum_{j=1}^n a_{ij}u_j, \quad i=1,\ldots,n.
\end{equation} 
We note that the local system was first introduced in \cite{GaVe19} 
in the case of two species.
In this paper, we generalize the results of \cite{CDJ19}
by imposing ``minimal'' conditions
on the initial datum $u^0$ and the kernels $K_{ij}$.

A third motivation comes from neuroscience. Indeed, following 
\cite{BFFT12,GRT17}, we are interested in the study of deterministic nonlocal 
models of the form \eqref{1.eq}--\eqref{1.nonloc} obtained as the mean-field limit 
of stochastic systems describing the evolution of the states of neurons belonging
to different populations. When the number of neurons becomes very large, the
solutions of the generalized Hodgkin--Huxley model of \cite{BFFT12} can be described
in the mean-field limit by a probability distribution $u_i$ for the $i$th species,
which solves the McKean--Vlasov--Focker--Planck equation of the type
\begin{equation}\label{1.McVFPeq}
  \pa_t u_i = \sigma\Delta u_i + \diver\sum_{j=1}^n\int_\dom M_{ij}(x,y)
	u_i(x)u_j(y)dxdy, \quad i=1,\ldots,n,
\end{equation}
where we simplified the diffusion part involving $\sigma$.
In the present work, we simplify the problem further by assuming that the interaction
kernels $M_{ij}$ have the special form $M_{ij}(x,y)=\na K_{ij}(x-y)$.
Our main objective is to advance the theory of nonlocal cross-diffusion systems.
This theoretical study can allow us in the future to design and analyze
efficient numerical schemes for the discretization of \eqref{1.eq}--\eqref{1.nonloc}.

Most nonlocal models studied in the literature describe a single species.
A simple example is $\pa_t u = \diver(uv)$ with $v = \na(K*u)$.
This corresponds to the mass continuity equation for the
density $u$ with a nonlocal velocity $v$. An $L^p$ theory for this
equation was provided in \cite{BTR11}, while the Wasserstein gradient-flow structure 
was explored in \cite{CDFLS11}. In the machine learning context, the equation can be 
seen as the mean-field limit of infinitely many hidden network units 
\cite{MMM19,SiSp19}. Besides, in addition to the cited papers, 
there exist some works dealing with the existence of solutions to multispecies 
nonlocal systems of the form \eqref{1.eq}--\eqref{1.nonloc}. Indeed, for two
species and symmetrizable cross-interaction potentials (i.e.\ $K_{12}=\alpha K_{21}$ 
for some $\alpha>0$) without diffusion $\sigma=0$, a complete existence and 
uniqueness theory for measure solutions to \eqref{1.eq}--\eqref{1.nonloc} 
in the whole space with smooth convolution kernels
was established in \cite{DiFa13} using the Wasserstein gradient-flow theory. 
A nonlocal system with size exclusion was analyzed in \cite{BBP17}, using entropy 
methods. In \cite{GaVe19}, a nonlocal version of the Shigesada--Kawasaki--Teramoto 
(SKT) cross-diffusion system, where the diffusion operator is replaced by an integral 
diffusion operator, was analyzed. Finally, closer to our study, in \cite{DiMo21}, 
the authors show the existence of weak solutions to a nonlocal version of the 
SKT system, where the nonlocalities are similar to the ones considered in this paper. 
Assuming some regularity on the convolution kernels, their proof is based on the 
so-called duality method \cite{DLMT15,LM17}. They also proved a localization 
limit result. 

All these works, except \cite{DiMo21}, are concerned with two-species models. 
Compared to previous results, we allow for an arbitrary number of species and 
nondifferentiable kernel functions. In the following section, we explain 
our approach in detail.

\subsection{Description of our approach}

The mathematical difficulties are the 
cross-diffusion terms and the nonlocality, which exclude the application of
standard techniques like maximum principles and regularity theory. 
For instance, it is well known that nonlocal diffusion operators generally do
not possess regularizing effects on the solution \cite{AMRT10}. 
The key of our analysis is the observation that the nonlocal system possesses,
like the associated local one, {\em two} entropies, namely
the Shannon-type entropy $H_1$ \cite{Sha48}
and the Rao-type entropy $H_2$ \cite{Rao82},
\begin{align*}
  H_1(u) &= \sum_{i=1}^n\int_\dom\pi_i u_i(\log u_i-1)dx, \\
  H_2(u) &= \frac12\sum_{i,j=1}^n\int_\dom\int_\dom\pi_i K_{ij}(x-y)u_i(x)u_j(y)dxdy,
\end{align*}
if the kernels are in detailed balance and positive definite
in the sense specified below and for suitable numbers $\pi_i>0$.
A formal computation
that is made rigorous below shows that the following entropy inequalities hold:
\begin{align}
  & \frac{dH_1}{dt} + 4\sigma\sum_{i=1}^n\int_\dom\pi_i |\na\sqrt{u_i}|^2 dx
	= -\sum_{i,j=1}^n\int_\dom\int_\dom\pi_i 
	K_{ij}(x-y)\na u_i(x)\cdot\na u_j(y)dxdy, \label{1.H1} \\
  & \frac{dH_2}{dt} + \sum_{i=1}^n\int_\dom\pi_i u_i|\na p_i[u]|^2 dx
	= -\sigma\sum_{i,j=1}^n\int_\dom\int_\dom\pi_i
	K_{ij}(x-y)\na u_i(x)\cdot\na u_j(y)dxdy. \label{1.H2}
\end{align}
These computations are valid if $K_{ij}$ is in detailed balance, which means that
there exist $\pi_1,\ldots,\pi_n>0$ such that 
\begin{equation}\label{1.db}
  \pi_i K_{ij}(x-y) = \pi_j K_{ji}(y-x)\quad\mbox{for all }i,j=1,\ldots,n,\
	x,y\in\dom.
\end{equation}
We recognize these identities as a generalized detailed-balance condition 
for the Markov chain associated to $(K_{ij}(x-y))$ (for fixed $x-y$), 
and in this case, $(\pi_1,\ldots,\pi_n)$ is the
corresponding reversible measure. The functionals $H_1$ and $H_2$ are Lyapunov
functionals if $(\pi_i K_{ij})$ is positive definite in the sense
\begin{equation}\label{1.pd}
  \sum_{i,j=1}^n\int_\dom\int_\dom\pi_i K_{ij}(x-y)v_i(x)v_j(y)dxdy\ge 0
	\quad\mbox{for all }v_i,v_j\in L^2(\dom).
\end{equation}
This condition generalizes the usual definition
of positive definite kernels to the multi-species case \cite{BPGL04}.
Examples of kernels that satisfy \eqref{1.db} and \eqref{1.pd} are given in 
Remark \ref{example}.
Because of the nonlocality, we cannot conclude $L^2(\dom)$ estimates for
$u_i$ and $\na u_i$ like in the local case; see \cite{JuZu20} and Appendix
\ref{local}. We deduce from
\eqref{1.H1} only bounds for $u_i\log u_i$ in $L^1(\dom)$ and 
$\sqrt{u_i}$ in $H^1(\dom)$.

These bounds are not sufficient to pass to the limit in the approximate problem. 
In particular, we cannot identify the limit of the product $u_i\na p_i[u]$, 
since $u_i$ and $\na p_i[u]$ are elements in larger spaces than $L^2(\dom)$. 
We solve this issue by exploiting the uniform $L^2(\dom)$ bound for 
$\sqrt{u_i}\na p_i[u]$ from \eqref{1.H2} and prove a ``compensated compactness''
lemma (see Lemma \ref{lem.ident} in Appendix \ref{aux}): If $u_\eps\to u$ strongly
in $L^p(\dom)$, $v_\eps\rightharpoonup v$ weakly in $L^p(\dom)$, and
$u_\eps v_\eps\rightharpoonup w$ weakly in $L^p(\dom)$ for some $1<p<2$ then
$uv=w$. 
The estimates from \eqref{1.H1}--\eqref{1.H2} are the key for the proof of the 
global existence of weak solutions to \eqref{1.eq}--\eqref{1.nonloc}. 

As a second result, we prove the weak-strong uniqueness of solutions, i.e., if
$u$ is a weak solution to \eqref{1.eq}--\eqref{1.nonloc}
satisfying $u_i\in L^2(0,T;H^1(\dom))$ and if $v$ is a ``strong'' solution
to this problem with the same initial data, then $u(t)=v(t)$ for a.e.\ $t\ge 0$. 
The proof uses the relative entropy
$$
  H(u|v) = \sum_{i=1}^n\int_\dom\pi_i\big(u_i(\log u_i-1)-u_i\log v_i+v_i\big)dx,
$$
a variant of which was used in \cite{Fis17} for reaction-diffusion systems 
in the context of renormalized solutions and
later extended to Shigesada--Kawasaki--Teramoto systems \cite{ChJu19}.
The recent work \cite{Hop21} generalizes this approach to more general 
Shigesada--Kawasaki--Teramoto as well as energy-reaction-diffusion systems.
Originally, the relative entropy method was devised for conservation laws to estimate
the $L^2$ distance between two solutions \cite{Daf79,DiP79}.
Up to our knowledge, we apply this techniques for the first time to 
nonlocal cross-diffusion systems.
The idea is to differentiate $H(u|v)$ and to derive the inequality
$$
  H(u(t)|v(t)) \le C\sum_{i=1}^n\int_0^t\|u_i-v_i\|_{L^1(\dom)}^2ds
	\quad\mbox{for }t>0.
$$
The Csisz\'ar--Kullback--Pinsker inequality \cite[Theorem A.2]{Jue16}
allows us to estimate the relative entropy from below by
$\|u_i(t)-v_i(t)\|_{L^1(\dom)}^2$, up to some factor. Then 
Gronwall's lemma implies that $u_i(t)=v_i(t)$ for a.e.\ $t>0$. 
The application of this inequality is different 
from the proof in \cite{ChJu19,Fis17,Hop21}, where the relative entropy is estimated
from below by $|u_i-v_i|^2$ on the set $\{u_i\le K\}$.
The difference originates from the nonlocal terms. 
Indeed, if $K_{ij}$ is bounded,
$$
  \sum_{i,j=1}^n\int_\dom\int_\dom\pi_i K_{ij}(x-y)(u_i-v_i)(x)(u_j-v_j)(y)dxdy
	\le C\sum_{i=1}^n\bigg(\int_\dom|u_i-v_i|dx\bigg)^2,
$$
leading to an estimate in $L^1(\dom)$.
In the local case, the associated estimate yields an $L^2(\dom)$ estimate:
$$
  \sum_{i,j=1}^n\int_\dom\pi_i a_{ij}(u_i-v_i)(x)(u_j-v_j)(x)dx
	\le C\sum_{i=1}^n\int_\dom|u_i-v_i|^2 dx.
$$
As the densities $u_i$ may be only nonnegative, we cannot use $\pi_i\log u_i$
as a test function in \eqref{1.eq} to compute $dH(u|v)/dt$. This issue is overcome
by regularizing the entropy by using $\log(u_i+\eps)$ for some $\eps>0$ as a
test funtion and then to pass to the limit $\eps\to 0$.

We observe that the uniqueness of weak solutions to cross-diffusion systems
is a delicate task, and there are only few results in the literature.
Most of the results are based on the fact that the total density $\sum_{i=1}^n u_i$
satisfies a simpler equation for which uniqueness can be shown; see 
\cite{BBP17,ChJu18}. A duality method for a nonlocal version of the 
Shigesada--Kawasaki--Teramoto system was used in \cite{GaVe19}.
In \cite{BBEP20}, a weak-strong uniqueness result on a cross-diffusion system, 
based on $L^2$ estimates, was shown.

The bounds obtained in the proof of our existence result are independent 
of the kernels, such that we can
perform the localization limit, our third main result. For this, we assume that
$K_{ij}=B_{ij}^\eps\to a_{ij}\delta_0$ as $\eps\to 0$ in
the sense of distributions, where $\delta_0$ is the Dirac delta distribution.
Then, if $u^\eps$ is a weak solution to \eqref{1.eq}--\eqref{1.nonloc}, we prove that
$u_i^\eps\to u_i$ strongly in $L^1(\dom\times(0,T))$, and the limit $u$ solves the local
system \eqref{1.eq} and \eqref{1.loc}. As a by-product, we
obtain the global existence of weak solutions to this problem; see Appendix
\ref{local} for the precise statement.

We summarize our main results:
\begin{itemize}
\item global existence of weak solutions to the nonlocal system 
\eqref{1.eq}--\eqref{1.nonloc} for nondifferentiable positive definite kernels 
in detailed balance;
\item weak-strong uniqueness of solutions to the nonlocal system;
\item localization limit to the local system \eqref{1.eq} and \eqref{1.loc}.
\end{itemize}

The paper is organized as follows. Our hypotheses and main results are made
precise in Section \ref{sec.main}. The global existence of weak solutions to
the nonlocal system and some regularity results are 
proved in Section \ref{sec.ex}. 
The weak-strong uniqueness result is shown in Section \ref{sec.wsn}.
In Section \ref{sec.loc}, the localization
limit, based on the a priori estimates of Section \ref{sec.ex}, is performed.
Finally, we collect some auxiliary lemmas in Appendix \ref{aux} and 
state a global existence result for the local system \eqref{1.eq}
and \eqref{1.loc} in Appendix \ref{local}.


\section{Main results}\label{sec.main}

We collect the main theorems which are proved in the subsequent sections.
We impose the following hypotheses:

\begin{labeling}{(A44)}
\item[(H1)] Data: Let $d\ge 1$, $T>0$, $\sigma>0$, 
and $u^0\in L^2(\dom)$ satisfies $u_i^0\ge 0$ in $\dom$, $i=1,\ldots,n$.
\item[(H2)] Regularity: $K_{ij}\in L^{s}(\dom)$ for $i,j=1,\ldots,n$, where
$s=d/2$ if $d>2$, $s>1$ if $d=2$, and $s=1$ if $d=1$.
\item[(H3)] Detailed balance: There exist $\pi_1,\ldots,\pi_n>0$ such that 
$\pi_i K_{ij}(x-y)=\pi_j K_{ji}(y-x)$ for all $i,j=1,\ldots,n$
and for a.e.\ $x,y\in\dom$.
\item[(H4)] Positive definiteness: For all $v_1,\ldots,v_n\in L^2(\dom)$, 
it holds that
$$
 \sum_{i,j=1}^n\int_\dom\int_\dom\pi_i K_{ij}(x-y)v_i(x)v_j(y)dxdy\ge 0.
$$
\end{labeling}

We need the same diffusivity $\sigma$ for all species, since otherwise
we cannot prove that the Rao-type functional $H_2$ is a Lyapunov functional.
The reason is the mixing of the species in the definition of $H_2$.

\begin{remark}[Kernels satisfying Hypotheses (H2)--(H4)]\rm\label{example}
Kernels satisfying Hypothesis (H4) with $n=1$ can be characterized by 
Mercer's theorem \cite{BPGL04,Mer09}. 
A simple example satisfying Hypotheses (H2)--(H4)
is $K_{ij}=a_{ij}\mathrm{1}_{B}$ for
$i,j=1,\ldots,n$, where $\mathrm{1}_{B}$ is the characteristic function of
the ball $B$ around the origin and there exist $\pi_1,\ldots,\pi_n>0$ such that
$(\pi_i a_{ij})\in\R^{n\times n}$ is symmetric and positive definite.

Another example that satisfies Hypotheses (H2)--(H4)
is given by the Gaussian kernel $B(|x-y|)=(2\pi)^{-d/2}\exp(-|x-y|^2/2)$. We define
for $i,j=1,\ldots,n$ and $x,y\in\R^d$,
\begin{equation*}
  K_{ij}(x-y) = B_{ij}^\eps(x-y)
	:= \frac{a_{ij}}{(2\pi\eps^2)^{d/2}}\exp\bigg(-\frac{|x-y|^2}{2\eps^2}\bigg),
\end{equation*}
where $\eps>0$ and $a_{ij}\ge 0$ are such that the matrix $(\pi_i a_{ij})$ is
symmetric and positive definite for some $\pi_i>0$. Thus, Hypothesis (H3)
holds. Hypothesis (H4) can be verified as follows. The identity
$$
  \frac{e^{-|x-y|^2/(2\eps^2)}}{(2\pi\eps^2)^{d/2}} 
	= \int_{\R^d}\frac{e^{-|x-z|^2/\eps^2}}{(\pi\eps^2)^{d/2}}
	\frac{e^{-|y-z|^2/\eps^2}}{(\pi\eps^2)^{d/2}}dz,
$$
shows that
\begin{align*}
  &\sum_{i,j=1}^n\int_\dom\int_\dom\pi_i K_{ij}(x-y)v_i(x)v_j(y)dxdy
	= \sum_{i,j=1}^n \int_\dom\int_\dom\pi_i a_{ij}
	\frac{e^{-|x-y|^2/(2\eps^2)}}{(2\pi\eps^2)^{d/2}}v_i(x)v_j(y)dxdy \\
	&= \sum_{i,j=1}^n \pi_i a_{ij}\int_{\R^d}\int_\dom
	\bigg(\frac{e^{-|x-z|^2/\eps^2}}{(\pi\eps^2)^{d/2}}v_i(x)\bigg)dx
	\int_\dom\bigg(\frac{e^{-|y-z|^2/\eps^2}}{(\pi\eps^2)^{d/2}}v_j(y)\bigg)dydz \\
	&\ge \frac{\alpha}{(\pi\eps^2)^{d}}\sum_{i=1}^n\int_{\R^d}
	\bigg(\int_\dom e^{-|x-z|^2/\eps^2}v_i(x)dx\bigg)^2 dz \ge 0,
\end{align*}
where $\alpha>0$ is the smallest eigenvalue of $(\pi_i a_{ij})$.
This proves the positive definiteness of $K_{ij}$.
Note that $B_{ij}^\eps\to a_{ij}\delta_0$ as $\eps\to 0$ in the sense of
distributions.

We can construct further examples from the Gaussian kernel. For instance,
$$
  K_{ij}(x-y) = \frac{a_{ij}}{1+|x-y|^2}, \quad i,j=1,\ldots,n,\ x,y\in\R^d,
$$
satisfies Hypothesis (H4), since
$$
  \frac{1}{1+|x-y|^2} = \int_0^\infty e^{-s(1+|x-y|^2)}ds,
$$
and $B(x-y)=\exp(-s(1+|x-y|^2))$ is positive definite.
\qed\end{remark}

We call $u=(u_1,\ldots,u_n)$ a {\em weak solution} to 
\eqref{1.eq}--\eqref{1.nonloc} if for all test functions $\phi_i\in
L^{d+2}(0,T;$ $W^{1,d+2}(\dom))$, $i=1,\ldots,n$, it holds that
\begin{equation}\label{ex.wf}
  \int_0^T\langle\pa_t u_i,\phi_i\rangle dt
	+ \sigma\int_0^T\int_{\dom}\na u_i\cdot\na\phi_i dxdt
	= -\int_0^T\int_{\dom}u_i\na p_i[u]\cdot\na\phi_i dxdt,
\end{equation}
where $\langle\cdot,\cdot\rangle$ denotes the dual pairing between
$W^{1,d+2}(\dom)'$ and $W^{1,d+2}(\dom)$,
and the initial datum $u_i(0)=u_i^0$ is satisfied in the sense of 
$W^{1,d+2}(\dom)'$.

First, we show the global existence of weak solutions. Let $Q_T=\dom\times(0,T)$.

\begin{theorem}[Global existence]\label{thm.ex}
Let Hypotheses (H1)--(H4) hold. 
Then there exists a global weak solution $u=(u_1,\ldots,u_n)$ 
to \eqref{1.eq}--\eqref{1.nonloc} satisfying 
$u_i\ge 0$ in $Q_T$ and
\begin{equation}\label{1.regul}
\begin{aligned}
  & u_i^{1/2}\in L^2(0,T;H^1(\dom)),\quad u_i\in L^{1+2/d}(Q_T)\cap
	L^q(0,T;W^{1,q}(\dom)), \\
	& \pa_t u_i\in L^q(0,T;W^{-1,q}(\dom)), \quad
  u_i\na p_i[u]\in L^q(Q_T),
\end{aligned}
\end{equation}
where $q=(d+2)/(d+1)$ and $i=1,\ldots,n$. The initial datum in \eqref{1.eq}
is satisfied in the sense of $W^{-1,q}(\dom):=W^{1,d+2}(\dom)'$.
Moreover, the following entropy inequalities hold:
\begin{align}
  H_1(u(t)) + 4\sigma\sum_{i=1}^n \int_0^t\int_\dom\pi_i|\na u_i^{1/2}|^2 dxds 
	&\le H_1(u^0), \label{ex.H1} \\
  H_2(u(t)) + \sum_{i=1}^n \int_0^t\int_\dom\pi_i u_i|\na p_i[u]|^2 dxds 
	&\le H_2(u^0). \label{ex.H2}
\end{align}
\end{theorem}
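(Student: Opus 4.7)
The plan is to build a regularized scheme that preserves both entropy structures \eqref{1.H1}--\eqref{1.H2}, derive bounds uniform in the regularization parameter, and pass to the limit; the nonlinear drift $u_i\na p_i[u]$ will be identified via the compensated compactness lemma announced for Appendix \ref{aux}.

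First I would mollify the kernels, $K_{ij}^\eta := K_{ij}\ast\rho_\eta$ with a symmetric mollifier $\rho_\eta$, which preserves (H3) automatically and (H4) by a direct substitution against test functions. For the evolution, I would use an implicit Euler time discretization of step $\tau$ augmented by a higher-order elliptic regularization $+\eta(-\Delta)^m$ with $m>d/2+1$, so that iterates lie in $W^{1,\infty}(\dom)$ and stay strictly positive. Existence at each time step would follow from a Leray--Schauder fixed-point argument using the entropy-variable formulation. Testing with $\pi_i\log u_i^{(k)}$ yields a discrete analog of \eqref{1.H1}, and testing with $\pi_i p_i^\eta[u^{(k)}]$ yields a discrete analog of \eqref{1.H2}; the detailed balance (H3) symmetrizes the cross terms into the form $\sum_{i,j}\int\int\pi_i K_{ij}^\eta(x-y)\na u_i(x)\cdot\na u_j(y)\,dx\,dy$, which by (H4) has the correct sign.

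From these discrete entropy inequalities, uniform bounds follow: $\sqrt{u_i}\in L^2(0,T;H^1(\dom))\cap L^\infty(0,T;L^2(\dom))$, so by Gagliardo--Nirenberg $u_i\in L^{1+2/d}(Q_T)$; writing $\na u_i=2\sqrt{u_i}\na\sqrt{u_i}$ and applying H\"older gives $u_i\in L^q(0,T;W^{1,q}(\dom))$ with $q=(d+2)/(d+1)$. The $H_2$ inequality delivers $\sqrt{u_i}\na p_i[u]\in L^2(Q_T)$, so $u_i\na p_i[u]\in L^q(Q_T)$, and the equation then places $\pa_t u_i\in L^q(0,T;W^{-1,q}(\dom))$. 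Aubin--Lions extracts a subsequence with $u_i^{\eta,\tau}\to u_i$ strongly in some $L^p(Q_T)$ with $p\in(1,2)$ and weakly in $L^q(0,T;W^{1,q}(\dom))$, while $\na p_i^{\eta,\tau}[u^{\eta,\tau}]$, computed as $\sum_j\int K_{ij}^\eta(x-y)\na u_j^{\eta,\tau}(y)\,dy$ (with the derivative moved onto $u_j$ since $K_{ij}$ need not be differentiable), converges weakly in $L^q(Q_T)$.

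The main obstacle is identifying the weak limit of the product $u_i^{\eta,\tau}\na p_i^{\eta,\tau}[u^{\eta,\tau}]$: since both factors lie only in $L^q$ with $q<2$, standard weak-strong pairings are unavailable. This is precisely the setting of the compensated compactness lemma from Appendix \ref{aux}, which from strong $L^p$ convergence of one factor, weak $L^p$ convergence of the other, and weak $L^p$ convergence of the product itself forces the correct identification. Once that is in hand, passage to the limit in the weak formulation \eqref{ex.wf} is routine; the entropy inequalities \eqref{ex.H1}--\eqref{ex.H2} follow from convexity of $H_1$ and $H_2$ via lower semicontinuity under the available weak convergences, together with Fatou-type arguments for the dissipation terms.
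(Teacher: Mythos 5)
Your strategy is essentially the paper's: implicit Euler in time plus an $H^m$ regularization in the entropy variables, the two entropy estimates, Aubin--Lions, and the compensated-compactness lemma of Appendix \ref{aux} (Lemma \ref{lem.ident}) to identify the limit of $u_i\na p_i[u]$. There is, however, one genuine gap at the derivation of the second entropy estimate. You propose to test the \emph{fully regularized} scheme with $\pi_i p_i^\eta[u^{(k)}]$. But the regularization that makes your fixed-point argument close and keeps the iterates strictly positive necessarily acts on the entropy variable $w_i=\pi_i\log u_i$ (so that a uniform $H^m$ bound on $w^{(k)}$ yields $u^{(k)}=e^{w^{(k)}/\pi}\in L^\infty$, bounded away from zero). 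With that regularization in place, the test function $\pi_i p_i^\eta[u^{(k)}]$ produces the term $\eta\, b(w^{(k)},\pi p^\eta[u^{(k)}])$, pairing $m$-th derivatives of $w^{(k)}$ with $m$-th derivatives of $p_i^\eta[u^{(k)}]$; this term has no sign and cannot be absorbed by any of the available bounds. This is exactly the obstruction the paper's proof is organized around: one first passes to the limit in the elliptic regularization \emph{at fixed time step} $\tau$, using only the $H_1$ estimate, obtains a regularization-free time-discrete scheme for which $\pi_i p_i[u^k]$ is an admissible ($W^{1,r_3}$) test function, and only then derives the discrete version of \eqref{1.H2} and hence the $L^2$ bound on $\sqrt{u_i}\,\na p_i[u]$. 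Without this reordering of the limits, your $H_2$ estimate---and with it the $L^q$ bound on $u_i\na p_i[u]$, the time-derivative bound, and the identification step---does not get off the ground. (Putting the $(-\Delta)^m$ regularization on $u$ instead of $w$ would fix the sign in the $H_2$ test but destroys positivity and the admissibility of $\pi_i\log u_i^{(k)}$ in the $H_1$ test.)

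A smaller point: the mollification $K_{ij}^\eta=K_{ij}*\rho_\eta$ is both unnecessary and, as stated, not justified. It is unnecessary because no derivative ever falls on the kernel: once $u_j$ is weakly differentiable one has $\na p_i[u]=\sum_j K_{ij}*\na u_j$, and the whole proof runs with $K_{ij}\in L^s(\dom)$ as in (H2). It is not justified because a general symmetric mollifier need not preserve (H4): writing the mollified quadratic form as $\int\rho_\eta(w)\,B(v,\tau_w v)\,dw$, where $\tau_w$ denotes translation by $w$, one obtains the positive semidefinite form $B$ evaluated at two \emph{different} arguments, whose sign (H4) does not control; preservation of (H4) would require the Fourier coefficients of $\rho_\eta$ to be nonnegative. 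Since the step is dispensable, simply drop it.
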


Unfortunately, we have been not able to allow for vanishing diffusion
$\sigma=0$, since estimate \eqref{1.H2} is not sufficient and assumption
\eqref{1.pd} is too weak to conclude gradient bounds. On the other hand,
we can allow for arbitrarily small $\sigma>0$, which means that cross-diffusion
may dominate diffusion.

Imposing more regularity on the kernel functions, we can derive $H^1(\dom)$
regularity for $u_i$, which is needed for the weak-strong uniqueness result.

\begin{proposition}[Regularity]\label{prop.regul}\sloppy
Let Hypotheses (H1)--(H4) hold and let $\na K_{ij}\in$ $L^{d+2}(\dom)$
for $i,j=1,\ldots,n$. Then there exists a weak solution $u=(u_1,\ldots,u_n)$ to 
\eqref{1.eq}--\eqref{1.nonloc} satisfying $u_i\ge 0$ in $\dom$ and
$$
  u_i\in L^2(0,T;H^1(\dom)), \quad \pa_t u_i\in L^2(0,T;H^{-1}(\dom)), \quad
	\na p_i[u]\in L^\infty(0,T;L^\infty(\dom)).
$$
Moreover, if additionally $\na K_{ij}$, $\Delta K_{ij}\in L^\infty(\dom)$
and $m_0\le u_i^0\le M_0$ in $\dom$ then $0<m_0 e^{-\lambda t}\le u_i(t)
\le M_0 e^{\lambda t}$ in $\dom$ for $t>0$, where $\lambda>0$ depends on
$\Delta K_{ij}$ and $u^0$.
\end{proposition}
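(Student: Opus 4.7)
The plan is to revisit the approximation scheme from Theorem \ref{thm.ex} and upgrade its uniform a priori bounds using the new assumption $\na K_{ij}\in L^{d+2}(\dom)$. For the approximate solutions $u_i^\tau$, Young's convolution inequality gives
\[
\|\na p_i[u^\tau](t)\|_{L^\infty(\dom)}\le\sum_{j=1}^n\|\na K_{ij}\|_{L^{d+2}(\dom)}\|u_j^\tau(t)\|_{L^{(d+2)/(d+1)}(\dom)}.
\]
Mass conservation yields $u_j^\tau\in L^\infty(0,T;L^1(\dom))$, and the Shannon bound $(u_j^\tau)^{1/2}\in L^2(0,T;H^1(\dom))$ together with Sobolev embedding places $u_j^\tau\in L^1(0,T;L^{d/(d-2)}(\dom))$ (with analogous estimates when $d\le 2$). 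A parabolic interpolation between these two spaces produces $u_j^\tau\in L^{2+4/d}(0,T;L^{(d+2)/(d+1)}(\dom))$ uniformly in $\tau$, hence $\na p_i[u^\tau]\in L^2(0,T;L^\infty(\dom))$ uniformly.

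Testing the approximate equation by $u_i^\tau$ and applying Young's inequality,
\[
\bigg|\int_\dom u_i^\tau\na p_i[u^\tau]\cdot\na u_i^\tau\,dx\bigg|\le\frac{\sigma}{2}\|\na u_i^\tau\|_{L^2(\dom)}^2+C\|\na p_i[u^\tau]\|_{L^\infty(\dom)}^2\|u_i^\tau\|_{L^2(\dom)}^2,
\]
Gronwall's lemma then produces $u_i^\tau\in L^\infty(0,T;L^2(\dom))\cap L^2(0,T;H^1(\dom))$ uniformly. Reading $\pa_t u_i^\tau$ off the equation delivers the $L^2(0,T;H^{-1}(\dom))$ bound, and a one-step bootstrap via $L^\infty(0,T;L^2(\dom))\hookrightarrow L^\infty(0,T;L^{(d+2)/(d+1)}(\dom))$ upgrades to $\na p_i[u^\tau]\in L^\infty(0,T;L^\infty(\dom))$ uniformly. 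These bounds pass to the limit along the same compactness arguments as in the proof of Theorem \ref{thm.ex}, yielding the desired weak solution with the stated regularity.

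For the pointwise bounds under the additional hypothesis $\na K_{ij},\Delta K_{ij}\in L^\infty(\dom)$, rewrite \eqref{1.eq} in non-divergence form,
\[
\pa_t u_i-\sigma\Delta u_i-\na p_i[u]\cdot\na u_i=u_i\,\Delta p_i[u],
\]
and observe that mass conservation yields $\|\Delta p_i[u](t)\|_{L^\infty(\dom)}\le\sum_j\|\Delta K_{ij}\|_{L^\infty(\dom)}\|u_j^0\|_{L^1(\dom)}=:\lambda$. Then $M_0 e^{\lambda t}$ is a supersolution and $m_0 e^{-\lambda t}$ a subsolution of the rewritten equation. Using $(u_i-M_0 e^{\lambda t})_+$ as a test function (admissible thanks to $u_i\in L^2(0,T;H^1(\dom))$ and $\pa_t u_i\in L^2(0,T;H^{-1}(\dom))$) and integrating by parts yields an inequality of the form $\tfrac{d}{dt}\|(u_i-M_0 e^{\lambda t})_+\|_{L^2}^2\le C\|(u_i-M_0 e^{\lambda t})_+\|_{L^2}^2$; since the positive part vanishes at $t=0$, Gronwall forces it to vanish identically. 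The lower bound follows symmetrically using $(m_0 e^{-\lambda t}-u_i)_+$.

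The chief obstacle is the circularity in the first step: the $L^2$ energy estimate demands a time-integrable bound on $\|\na p_i[u]\|_{L^\infty(\dom)}$, which itself requires integrability of $u_j$ stronger than what Theorem \ref{thm.ex} provides directly. This is unlocked only by combining the Sobolev embedding of $u_j^{1/2}$ with mass conservation via parabolic interpolation, and the specific exponent $d+2$ in the hypothesis $\na K_{ij}\in L^{d+2}(\dom)$ is precisely what pairs with the Sobolev exponent $d/(d-2)$ to close the loop; any weaker exponent would force $u_j$ into a time-integrability regime that the entropy estimates cannot supply.
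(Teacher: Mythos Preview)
Your argument for the pointwise bounds is essentially the paper's, so that part is fine. The main gap is in the first half.

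You propose to reuse the approximation scheme of Theorem~\ref{thm.ex} and test with $u_i^\tau$. This does not work as stated. At the $(\delta,\tau)$-level, the scheme \eqref{ex.approx} carries the regularizing term $\delta b(w^k,\phi)$; with $\phi_i=u_i^k=\exp(w_i^k/\pi_i)$ the term $\delta b(w^k,u^k)$ has no sign and no uniform control, so the energy identity does not close. After the limit $\delta\to 0$, the time-discrete solutions of \eqref{ex.approx2} are only known to lie in $W^{1,d/(d-1)}(\dom)$, which is strictly weaker than $H^1(\dom)$ for $d\ge 3$; hence $u_i^k$ is not an admissible test function, and you face the circularity of needing $\na u_i^k\in L^2$ to prove $\na u_i^k\in L^2$. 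The paper is explicit about this: it introduces a \emph{different} approximate scheme \eqref{ex.approx3}, based on a bounded truncation $(u_i^k)^+/(1+\delta(u_i^k)^+)$ and a fixed point in $L^2$ with $H^1\hookrightarrow L^2$ compactness, so that the approximate solutions are in $H^1$ by construction and the $L^2$ test is legitimate.

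Even granting a suitable scheme, the paper's $L^2$ estimate is organized differently from yours, and more directly. Instead of first invoking the entropy bounds to place $\na p_i[u^k]$ in $L^{2+4/d}(0,T;L^\infty)$ and then running Gr\"onwall, the paper estimates the cubic term by H\"older with exponents $(2+4/d,\,d+2,\,2)$ and bounds $\|\na p_i[u^k]\|_{L^{d+2}(\dom)}\le C_K\sum_j\|u_j^k\|_{L^1(\dom)}$, which is a time-independent constant by mass conservation. Then Gagliardo--Nirenberg with the $L^1$ endpoint yields $\|u_i^k\|_{L^{2+4/d}}^2\le\eps\|\na u_i^k\|_{L^2}^2+C(\eps)$, and the right-hand side is absorbed into the diffusion term with no Gr\"onwall needed. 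In particular, the paper never uses the Shannon entropy or Sobolev embedding for this step; your interpolation $L^\infty_tL^1_x\cap L^1_tL^{d/(d-2)}_x\hookrightarrow L^{2+4/d}_tL^{(d+2)/(d+1)}_x$ is correct but unnecessary once one notices that $\|\na p_i[u]\|_{L^{d+2}}$ is controlled by the conserved mass alone.
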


The proof of the $H^1(\dom)$ regularity
is based on standard $L^2$ estimates if $\na K_{ij}\in L^\infty(\dom)$.
The difficulty is the reduced regularity $\na K_{ij}\in L^{d+2}(\dom)$, which requires
some care. Indeed, the test function $u_i$ in the weak formulation of
\eqref{1.eq} leads to a cubic term, which is reduced to a subquadratic term 
for $\na u_i$ by combining the Gagliardo--Nirenberg inequality 
and the uniform $L^1(\dom)$ bound for $u_i$.

Similar lower and upper bounds as in Proposition \ref{prop.regul}
were obtained in \cite{DiMo21} with a different proof. Since the $L^\infty$
bounds depend on the derivatives of $K_{ij}$,
they do not carry over in the localization limit to the local system.
In fact, it is an open problem whether the local system \eqref{1.eq}
and \eqref{1.loc} possesses {\em bounded} weak solutions.
The proposition also holds for kernel functions $K_{ij}(x,y)$ that are
used in neural network theory; see Remark \ref{rem.kernel}.

\begin{theorem}[Weak-strong uniqueness]\label{thm.ws}
Let $K_{ij}\in L^\infty(\R^d)$ for $i,j=1,\ldots,n$.
Let $u$ be a nonnegative weak solution to \eqref{1.eq}--\eqref{1.nonloc} satisfying
\eqref{1.regul} as well as $u_i\in L^2(0,T;H^1(\dom))\cap H^1(0,T;H^{-1}(\dom))$,
and let $v=(v_1,\ldots,v_n)$ be a ``strong'' solution to 
\eqref{1.eq}--\eqref{1.nonloc}, i.e.\ a weak solution to 
\eqref{1.eq}--\eqref{1.nonloc} satisfying
$$
  c\le v_i\le C\quad\mbox{in }Q_T,
	\quad \pa_t v_i\in L^2(0,T;H^{-1}(\dom)), 
	\quad v_i\in L^\infty(0,T;W^{1,\infty}(\dom)),
$$
for some $C\ge c>0$ and having the same initial data as $u$. 
Then $u(x,t)=v(x,t)$ for a.e.\ $(x,t)\in\dom\times(0,T)$. 
\end{theorem}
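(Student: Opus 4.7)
The strategy, as outlined in the introduction, is the relative entropy method applied to
\[
H(u|v)(t)=\sum_{i=1}^n\int_\dom\pi_i\bigl(u_i\log u_i-u_i-u_i\log v_i+v_i\bigr)dx,
\]
which is nonnegative and vanishes only when $u=v$ a.e. Because $u_i$ may vanish, I would first regularize by using $\pi_i\log((u_i+\eps)/v_i)$ as a test function in the weak formulation \eqref{ex.wf} for $u_i$ and $\pi_i(1-(u_i+\eps)/v_i)$ in the weak formulation for $v_i$; both are admissible since $v_i\ge c>0$ and $v_i\in W^{1,\infty}$. I would perform the standard relative-entropy computation and pass to the limit $\eps\to 0$ using the a priori bounds from Theorem \ref{thm.ex} and the regularity of $v$.

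Setting $w_i=u_i-v_i$ and using the identity $u_i\nabla\log(u_i/v_i)=\nabla w_i-w_i\nabla\log v_i$, a computation analogous to the formal derivations of \eqref{1.H1}--\eqref{1.H2} leads to
\[
\frac{d}{dt}H(u|v)+\sigma\sum_{i=1}^n\pi_i\!\int_\dom\! u_i|\nabla\log(u_i/v_i)|^2dx+\sum_{i,j=1}^n\pi_i\!\int_\dom\!\int_\dom\!K_{ij}(x-y)\nabla w_i(x)\cdot\nabla w_j(y)\,dxdy=R,
\]
where $R$ collects the non-dissipative cross terms built from $v$. The diffusion Fisher-information term on the left is manifestly nonnegative, and the double nonlocal integral is nonnegative by (H4) applied componentwise to $\nabla w_i\in L^2(\dom)$; both can be dropped in the upper bound.

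The delicate step is to bound $|R|\le C\sum_i\|w_i\|_{L^1(\dom)}^2$. The key ingredients are $K_{ij}\in L^\infty$ and the $W^{1,\infty}$ regularity of $v$ (so $\nabla\log v_i\in L^\infty$). I would arrange the manipulations so that the nonlocal contributions appear in the form $\iint K_{ij}(x-y)\phi_i(x)\psi_j(y)\,dxdy$ with $\phi_i,\psi_j$ linear in $w_i,w_j$, producing the desired $L^1$ bound via $\|K_{ij}\|_\infty\|\phi_i\|_1\|\psi_j\|_1\le C\|w\|_{L^1}^2$; this is the mechanism explicitly emphasized after \eqref{1.pd}. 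Any residual contributions involving $\nabla w_i\in L^2$ are absorbed into the diffusion Fisher information on the left using $u_i|\nabla\log(u_i/v_i)|^2=|\nabla w_i-w_i\nabla\log v_i|^2/u_i$ and Young's inequality.

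Combining the resulting inequality $\frac{d}{dt}H(u|v)\le C\sum_i\|u_i-v_i\|_{L^1(\dom)}^2$ with the Csisz\'ar--Kullback--Pinsker inequality (valid since $v_i\ge c$), which yields $\|u_i-v_i\|_{L^1(\dom)}^2\le C'H(u|v)$, and noting $H(u(0)|v(0))=0$, Gronwall's lemma forces $H(u|v)(t)\equiv 0$, so $u=v$ a.e. The main obstacle is the $L^1$-style bound on $R$: unlike the local case, where the analogous computation yields an $L^2$ estimate directly, the nonlocal setting forces a careful integration by parts in the convolution so that no distributional derivative of $K_{ij}$ survives and so that only the $W^{1,\infty}$ regularity of $v$ is used. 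A secondary technical point is the $\eps\to 0$ limit in the cross-diffusion term $\int u_i^\eps\nabla p_i[u^\eps]\cdot\nabla\log((u_i^\eps+\eps)/v_i)\,dx$, which is handled by the weak-convergence tools already developed in the existence proof.
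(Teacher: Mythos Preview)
Your overall strategy—regularized relative entropy, Fisher-information sign, positive-definiteness of the nonlocal bilinear form, then Csisz\'ar--Kullback--Pinsker and Gronwall—coincides with the paper's. The difference lies in how the nonlocal remainder is controlled. You propose to extract the quadratic form $\sum_{i,j}\iint\pi_iK_{ij}(x-y)\nabla w_i(x)\cdot\nabla w_j(y)\,dxdy$ and drop it by (H4), leaving a cross term $R\sim\sum_{i,j}\iint\pi_iK_{ij}\,V_i(x)\cdot\nabla w_j(y)\,dxdy$ with $V_i=\tfrac12 w_i\nabla\log v_i$; this term is \emph{not} directly $\le C\|w\|_{L^1}^2$, and you correctly flag that it must be absorbed into the Fisher term via $\|\nabla w_j\|_{L^1}\le\|u_j\|_{L^1}^{1/2}\bigl(\int u_j|\nabla\log(u_j/v_j)|^2\bigr)^{1/2}+C\|w_j\|_{L^1}$ and Young's inequality. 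This works, but requires retaining and using the diffusion dissipation.

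The paper instead writes the entire nonlocal contribution as $-\sum_{i,j}\iint\pi_iK_{ij}\bigl(U_iU_j+2U_iV_j\bigr)$ with $U_i=(u_i+\eps)\nabla\log\frac{u_i+\eps}{v_i}$, completes the square in the symmetric bilinear form as $(U+V)_i(U+V)_j-V_iV_j$, applies (H4) to the $(U+V)$-part, and is left with $\sum_{i,j}\iint\pi_iK_{ij}V_i\cdot V_j\le C\|w\|_{L^1}^2$ directly. The Fisher term is simply discarded. This is slightly cleaner because it decouples the nonlocal estimate from the linear diffusion and realizes exactly the mechanism you allude to (``$\phi_i,\psi_j$ linear in $w$'') without any absorption step. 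A minor point: in your last paragraph the notation $u_i^\eps$ suggests an approximation of the solution, but only the test function is regularized; the $\eps\to 0$ limit is a dominated-convergence argument using $u_i\in C^0([0,T];L^2)$ and the $L^2(0,T;H^1)$ bound on $u_i$, not the weak-convergence machinery from the existence proof.
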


The existence of a strong solution $v_i$ to \eqref{1.eq}--\eqref{1.nonloc}
was proved in \cite[Prop.~1]{CDJ19}, but only locally in time and in the whole
space setting. While the proof can be adapted to the case of a torus, 
it is less clear how to extend it globally in time.
Theorem \ref{thm.ws} cannot be extended in a straightforward way to the whole-space
case since $v_i\ge c>0$ would be nonintegrable. In the case of the
Maxwell--Stefan cross-diffusion system on a bounded domain $\Omega\subset\R^d$, 
it is possible to relax the lower bound to $v_i>0$ a.e.\ and 
$\log v_i\in L^2(0,T;H^1(\Omega))$ \cite{HJT21}. The proof could be possibly
extended to the whole space, but the computations in \cite{HJT21}
are made rigorous by exploiting
the specific structure of the Maxwell--Stefan diffusion coefficients.

For the localization limit, we choose the radial kernel
\begin{equation}\label{2.K}
  K_{ij}^\eps(x-y) = \frac{a_{ij}}{\eps^{d}}B\bigg(\frac{|x-y|}{\eps}\bigg),
	\quad i,j=1,\ldots,n,\ x,y\in\dom,
\end{equation}
where $B\in C^0(\R)$, $\operatorname{supp}(B)\subset (-1,1)$,
$\int_{-1}^1 B(|z|)dz=1$, and $a_{ij}\ge 0$ is such that
$(\pi_i a_{ij})$ is symmetric and positive definite for some $\pi_i>0$, $i=1,\ldots,n$.

\begin{theorem}[Localization limit]\label{thm.loc}
Let $K_{ij}^\eps$ be given by \eqref{2.K} and satisfying Hypothesis (H4). 
Let $u^\eps$ be 
the weak solution to \eqref{1.eq}--\eqref{1.nonloc},
constructed in Theorem \ref{thm.ex}. Then there
exists a subsequence of $(u^\eps)$ that is not relabeled such that, as $\eps\to 0$,
$$
  u_\eps\to u\quad\mbox{strongly in }L^2(0,T;L^{d/(d-1)}(\dom)),
$$
if $d \geq 2$ and strongly in $L^2(0,T;L^r(\dom))$ for any $r<\infty$ if $d=1$. 
Moreover, $u$ is a nonnegative weak solution to \eqref{1.eq} and \eqref{1.loc}.
\end{theorem}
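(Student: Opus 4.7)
The plan is to leverage the fact that the a priori estimates from Theorem~\ref{thm.ex} are insensitive to $\eps$. The only $\eps$-dependent ingredient on the right-hand side of the entropy bounds is $H_2(u^{0})$; using $\|B^\eps\|_{L^1(\R^d)}=1$ together with Young's convolution inequality,
\[
H_2(u^0) = \tfrac12\sum_{i,j=1}^n \pi_i a_{ij}\int_\dom u_i^0\,(B^\eps * u_j^0)\,dx \le C\|u^0\|_{L^2(\dom)}^2,
\]
uniformly in $\eps$. Hence \eqref{ex.H1}--\eqref{ex.H2} together with the regularity \eqref{1.regul} furnish $\eps$-uniform bounds: $\sqrt{u_i^\eps}$ in $L^2(0,T;H^1(\dom))$, $u_i^\eps$ in $L^\infty(0,T;L^1(\dom))\cap L^{1+2/d}(Q_T)\cap L^q(0,T;W^{1,q}(\dom))$, $u_i^\eps\na p_i^\eps[u^\eps]$ in $L^q(Q_T)$, and $\pa_t u_i^\eps$ in $L^q(0,T;W^{-1,q}(\dom))$, where $q=(d+2)/(d+1)\in(1,2)$.

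Next, I extract a strongly convergent subsequence. The Aubin--Lions lemma applied to the bounds $u_i^\eps\in L^q(0,T;W^{1,q}(\dom))$ and $\pa_t u_i^\eps\in L^q(0,T;W^{-1,q}(\dom))$ yields $u_i^\eps\to u_i$ strongly in $L^q(Q_T)$ along a subsequence. A Gagliardo--Nirenberg estimate of the form $\|u_i^\eps\|_{L^{d/(d-1)}(\dom)}\le C\|\na\sqrt{u_i^\eps}\|_{L^2(\dom)}\|u_i^\eps\|_{L^1(\dom)}^{1/2}$ combined with mass conservation supplies a uniform bound for $u_i^\eps$ in $L^2(0,T;L^{d/(d-1)}(\dom))$; a standard interpolation argument upgrades the $L^q(Q_T)$ strong convergence to the announced strong convergence in $L^2(0,T;L^{d/(d-1)}(\dom))$. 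For $d=1$ the same scheme delivers $L^2(0,T;L^r(\dom))$ for every $r<\infty$ via $H^1(\dom)\hookrightarrow L^\infty(\dom)$. Nonnegativity of $u_i$ is inherited from $u_i^\eps\ge 0$, and the initial datum is recovered from the $\eps$-uniform bound on $\pa_t u_i^\eps$.

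I then pass to the limit in \eqref{ex.wf}. The diffusion term converges by weak convergence of $\na u_i^\eps$ in $L^q(Q_T)$, and the time-derivative term converges in the sense of distributions. For the nonlocal drift, I write $\na p_i^\eps[u^\eps]=\sum_j a_{ij}\,B^\eps * \na u_j^\eps$; since $\na u_j^\eps\rightharpoonup\na u_j$ in $L^q(Q_T)$ and $B^\eps * \psi\to\psi$ in $L^{q'}(Q_T)$ for every $\psi\in L^{q'}(Q_T)$, a duality computation shows $\na p_i^\eps[u^\eps]\rightharpoonup \sum_j a_{ij}\na u_j$ weakly in $L^q(Q_T)$. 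Moreover $u_i^\eps\na p_i^\eps[u^\eps]$ is bounded in $L^q(Q_T)$ and therefore converges weakly along a subsequence to some $w\in L^q(Q_T)$.

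The main obstacle is identifying the limit $w$: neither $u_i^\eps$ nor $\na p_i^\eps[u^\eps]$ is uniformly bounded in $L^2(Q_T)$, so the standard weak-strong product argument does not apply. This is precisely the configuration addressed by the compensated-compactness Lemma~\ref{lem.ident} with $p=q\in(1,2)$, which applied to the strongly convergent $u_i^\eps$ in $L^q$, the weakly convergent $\na p_i^\eps[u^\eps]$ in $L^q$, and the weakly convergent product in $L^q$, identifies $w = u_i\sum_j a_{ij}\na u_j$. This completes the passage to the limit in \eqref{ex.wf} and yields the weak form of \eqref{1.eq} with $p_i[u]$ given by \eqref{1.loc}.
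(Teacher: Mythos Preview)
Your overall strategy coincides with the paper's: $\eps$-uniform entropy bounds, compactness via Aubin--Lions, duality to show $\na p_i^\eps[u^\eps]\rightharpoonup\sum_j a_{ij}\na u_j$ in $L^q(Q_T)$, and then Lemma~\ref{lem.ident} to identify the drift. Your use of Lemma~\ref{lem.ident} is in fact a bit more direct than the paper's: you apply it to the triple $\big(u_i^\eps,\ \na p_i^\eps[u^\eps],\ u_i^\eps\na p_i^\eps[u^\eps]\big)$ in $L^q$, whereas the paper first identifies the limit of $\sqrt{u_i^\eps}\,\na p_i^\eps[u^\eps]$ (exploiting its $L^2$ bound from~\eqref{ex.H2}) and then multiplies by the strongly convergent $\sqrt{u_i^\eps}$. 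Both routes are valid.

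There is, however, a genuine gap in your compactness step. Standard Aubin--Lions applied to the pair $L^q(0,T;W^{1,q})$ / $L^q(0,T;W^{-1,q})$ yields strong convergence of $u_i^\eps$ only in $L^q(0,T;L^{d/(d-1)})$, and your claim that ``a standard interpolation argument upgrades'' this to $L^2(0,T;L^{d/(d-1)})$ is not justified. The Gagliardo--Nirenberg estimate you quote places the sequence \emph{exactly} at the borderline: interpolating $\sqrt{u_i^\eps}$ between $L^2(0,T;H^1)$ and $L^\infty(0,T;L^2)$ one finds that $u_i^\eps$ is bounded in $L^p(0,T;L^r)$ with $p>2$ only when $r<d/(d-1)$, so there is no surplus integrability to run a Vitali-type argument. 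As an abstract principle, ``strong in $L^q(Q_T)$ plus bounded in $L^2(0,T;L^{d/(d-1)})$ implies strong in $L^2(0,T;L^{d/(d-1)})$'' is false (take $u_\eps(x,t)=\eps^{-(d-1)}\mathbf{1}_{[0,\eps]^d}(x)$ constant in time). The paper avoids this issue by invoking a \emph{nonlinear} Aubin--Lions lemma in the version of~\cite{BCJ20,DrJu12}, which takes the $L^2(0,T;H^1)$ bound on $\sqrt{u_i^\eps}$ together with the time-derivative bound as input and delivers the $L^2(0,T;L^{d/(d-1)})$ compactness directly. You should replace the interpolation claim by an appeal to that result. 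Note that this gap concerns only the stated mode of convergence; your $L^q(Q_T)$ strong convergence already suffices to pass to the limit in the weak formulation and conclude that $u$ solves \eqref{1.eq}, \eqref{1.loc}.
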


The existence of global weak solutions to \eqref{1.eq} and 
\eqref{1.loc} can also be proved for any bounded domain; see Appendix \ref{local}.


\section{Global existence for the nonlocal system}\label{sec.ex}

We prove the global existence of a nonnegative weak solution $u$ to
\eqref{1.eq}--\eqref{1.nonloc} and show the regularity properties of
Proposition \ref{prop.regul}. Since the proof is based on
the entropy method similar to \cite[Chapter 4]{Jue16}, we sketch the standard
arguments and focus on the derivation of uniform estimates.

{\em Step 1: Solution of an approximated system.}
Let $T>0$, $N\in\N$, $\tau=T/N$, $\delta>0$, and $m\in\N$ with $m>d/2+1$. 
We proceed by induction over $k\in\N$.
Let $u^{k-1}\in L^2(\dom;\R^n)$ be given. (The superindex $k$ refers to
the time step $t_k=k\tau$.)
Set $u_i(w)=\exp(w_i/\pi_i)>0$.
We wish to find $w^k\in H^m(\dom;\R^n)$ to the approximated system
\begin{align}
  \frac{1}{\tau}&\int_\dom(u(w^k)-u^{k-1})\cdot\phi dx
	+ \sigma\sum_{i=1}^n\int_\dom\na u_i(w^k)\cdot\na\phi_i dx 
	+ \delta b(w^k,\phi) \nonumber \\ 
	&= -\sum_{i=1}^n\int_\dom u_i(w^k)\na p_i[u(w^k)]\cdot\na\phi_i dx, \label{ex.approx}
\end{align}
for $\phi=(\phi_1,\ldots,\phi_n)\in H^m(\dom;\R^n)$. The bilinear form
$$
  b(w^k,\phi) = \int_\dom\bigg(\sum_{|\alpha|=m}D^\alpha w^k\cdot D^\alpha\phi
	+ w\cdot\phi\bigg)dx,
$$
is coercive on $H^m(\dom;\R^n)$, i.e.\ $b(w^k,w^k) \ge C\|w^k\|_{H^m(\dom)}^2$
for some $C>0$,
as a consequence of the generalized Poincar\'e--Wirtinger inequality.
By the fixed-point argument on the space $L^\infty(\dom;\R^n)$ 
used in \cite[Section 4.4]{Jue16}, it is
sufficient to derive a uniform bound for $w_i^k$ in $H^m(\dom)$, 
which embeddes compactly into $L^\infty(\dom)$. To this end, we use
the test function $\phi_i=w_i^k=\pi_i\log u_i^k$ (with $u_i^k:=u_i(w^k)$)
in \eqref{ex.approx}:
\begin{align*}
  \sum_{i=1}^n&\frac{\pi_i}{\tau}\int_\dom(u_i^k-u_i^{k-1})\cdot\log u_i^k dx
	+ 4\sigma\sum_{i=1}^n\pi_i\int_\dom|\na(u_i^k)^{1/2}|^2 dx
  + \delta b(w^k,w^k) \\
	&= -\sum_{i=1}^n\int_\dom u_i^k\na p_i[u^k]\cdot\na w_i^k dx
	= -\sum_{i=1}^n\int_\dom\pi_i\na p_i[u^k]\cdot\na u_i^k dx,
\end{align*}
where we used the identity $u_i^k\na w_i^k=\pi_i \na u_i^k$.
An integration by parts gives
\begin{equation}\label{ex.ibp}
  \int_\dom\na K_{ij}(x-y)u_j^k(y)dy
	= \int_\dom\na K_{ij}(z)u_j^k(x-z)dz
	= \int_\dom K_{ij}(x-y)\na u_j^k(y)dy.
\end{equation}
Thus, in view of definition \eqref{1.nonloc} of $p_i[u^k]$
and Hypothesis (H4),
\begin{align*}
  \sum_{i=1}^n&\frac{\pi_i}{\tau}\int_\dom(u_i^k-u_i^{k-1})\cdot\log u_i^k dx
	+ 4\sigma\sum_{i=1}^n\pi_i\int_\dom|\na(u_i^k)^{1/2}|^2 dx
  + \delta b(w^k,w^k) \\
	&= -\sum_{i,j=1}^n\int_\dom\int_\dom\pi_i K_{ij}(x-y)\na u_j^k(y)
	\cdot\na u_i^k(x) dxdy \le 0.
\end{align*}
The convexity of $f(z)=z(\log z-1)$ for $z\ge 0$ implies that
$f(z)-f(y)\le f'(z)(z-y)$ for $y,z>0$. Using this inequality to estimate the
first integral and the coercivity of $b(w^k,w^k)$ to estimate the third term, 
we find that
\begin{align}
  \frac{1}{\tau}\sum_{i=1}^n &\int_\dom\pi_i\big(u_i^k(\log u_i^k-1) 
	- u_i^{k-1}(\log u_i^{k-1}-1)\big)dx \nonumber \\
	&{}+ 4\sigma \sum_{i=1}^n\pi_i\|\na(u^k_i)^{1/2}\|_{L^2(\dom)}^2 
	+ \delta C\sum_{i=1}^n\|w_i^k\|_{H^m(\dom)}^2 \le 0. \label{ex.ei0}
\end{align}
This provides a uniform estimate for $w^k$ in $H^m(\dom)
\hookrightarrow L^\infty(\dom)$ (not uniform in $\delta$), 
necessary to conclude the fixed-point
argument and giving the existence of a solution $w^k\in H^m(\dom;\R^n)$
to \eqref{ex.approx}. This defines $u^k:=u(w^k)$, finishing the induction step.

To derive further uniform estimates, we wish to use $\psi_i=\pi_i p_i[u^k]$ as a 
test function in \eqref{ex.approx}. However, we cannot estimate the term
$\delta b(w^k,\psi)$ appropriately. Therefore, we perform the limits
$\delta\to 0$ and $\tau\to 0$ separately.

{\em Step 2: Limit $\delta\to 0$.}
Let $u^\delta=(u_1^\delta,\ldots,u_n^\delta)$ 
with $u_i^\delta=u_i(w^k)$ be a solution to \eqref{ex.approx} and let
$w_i^\delta=\pi_i\log u_i^\delta$ for $i=1,\ldots,n$ (slightly abusing the
notation). Estimate \eqref{ex.ei0}
and the Poincar\'e--Wirtinger inequality show that $(u_i^\delta)^{1/2}$ is 
uniformly bounded in $H^1(\dom)$ and, by Sobolev's embedding, in
$L^{r_1}(\dom)$, where $r_1=2d/(d-2)$ if $d>2$, $r_1<\infty$ if $d=2$, and 
$r_1=\infty$ if $d=1$. Therefore, $\na u_i^\delta 
= 2(u_i^\delta)^{1/2}\na(u_i^\delta)^{1/2}$ is uniformly bounded in 
$L^{r_2}(\dom)$, where $r_2=d/(d-1)$ if $d>2$, $r_2<2$ if $d=2$, and $r_2=2$ if $d=1$. 
By Sobolev's embedding, $(u_i^\delta)$ is relatively compact
in $L^{r}(\dom)$ for $r<r_1/2$, and
there exists a subsequence that is not relabeled such that, as $\delta\to 0$,
\begin{align*}
  u_i^\delta\to u_i &\quad\mbox{strongly in }L^{r}(\dom), \quad r<r_1/2, \\
  \na u_i^\delta\rightharpoonup \na u_i &\quad\mbox{weakly in }L^{r_2}(\dom), \\
	\delta w_i^\delta\to 0 &\quad\mbox{strongly in }H^m(\dom).
\end{align*}
In particular, we have, up to a subsequence, $u_i^\delta\to u_i$ a.e.\ and
$(u_i^\delta)$ is dominated by some function in $L^r(\dom)$. 
By dominated convergence,
$p_i[u^\delta]\to p_i[u]$ a.e. Young's convolution inequality (see
Lemma \ref{lem.young} in Appendix \ref{aux}) shows that, 
for $d>2$,
\begin{align*}
  \|p_i[u^\delta]\|_{L^\infty(\dom)} 
	&\le \sum_{j=1}^n\bigg\|\int_\dom K_{ij}(\cdot-y)u_j^\delta(y)dy
	\bigg\|_{L^\infty(\dom)} \\
  &\le \sum_{j=1}^n\|K_{ij}\|_{L^{d/2}(\dom)}\|u_j^\delta\|_{L^{d/(d-2)}(\dom)}\le C,
\end{align*}
In a similar way, we can prove that $(p_i[u^\delta])$ is bounded in
$L^r(\dom)$ for any $r<\infty$ if $d=2$ and in $L^\infty(\dom)$ if $d=1$, assuming
that $K_{ij}\in L^1(\dom)$. Lemma \ref{lem.conv} in Appendix \ref{aux} 
implies that $p_i[u^\delta]\to p_i[u]$ strongly in
$L^r(\dom)$ for any $r<\infty$. Furthermore, if $d>2$,
$$
  \|\na p_i[u^\delta]\|_{L^{r_3}(\dom)} 
	\le \sum_{j=1}^n\|K_{ij}\|_{L^{d/2}(\dom)}\|\na u_j^\delta\|_{L^{d/(d-1)}(\dom)}
	\le C, 
$$
where $r_3=d$, and $(\na p_i[u^\delta])$ is bounded in $L^{r_3}(\dom)$ for 
some $r_3>2$ if $d=2$ and for $r_3=2$ if $d=1$. Hence, for a subsequence,
$$
  \na p_i[u^\delta]\rightharpoonup \na p_i[u]\quad\mbox{weakly in }L^{r_3}(\dom).
$$
It follows that $(u_i^\delta\na p_i[u^\delta])$ is bounded in
$L^{\min\{2,d/(d-1)\}}(\dom)$ and
$$
  u_i^\delta\na p_i[u^\delta]\rightharpoonup u_i\na p_i[u]\quad\mbox{weakly in }
	L^{\min\{2,d/(d-1)\}}(\dom).
$$
Thus, we can pass to the limit $\delta\to 0$ in \eqref{ex.approx} to conclude that
$u_i^k:=u_i\ge 0$ for $i=1,\ldots,n$ solves
\begin{equation}\label{ex.approx2}
  \frac{1}{\tau}\int_\dom(u^k-u^{k-1})\cdot\phi dx 
	+ \sigma\sum_{i=1}^n\int_\dom\na u_i^k\cdot\na\phi_i dx
	= -\sum_{i=1}^n\int_\dom u_i^k\na p_i[u^k]\cdot\na\phi_i dx,
\end{equation}
for all test functions $\phi_i\in W^{1,r_3}(\dom)$. 
Observe that $p_i[u^k]$ is an element of the space $W^{1,r_3}(\dom)$
and is an admissible test function; this will be used in the next step.

{\em Step 3: Uniform estimates.}
We introduce the piecewise constant in time functions $u^{(\tau)}(x,t)=u^k(x)$ 
for $x\in\dom$ and $t\in((k-1)\tau,k\tau]$, 
$k=1,\ldots,N$. At time $t=0$, we set 
$u_i^{(\tau)}(\cdot,0)=u_i^0$. Furthermore, we use the time-shift operator 
$(S_\tau u^{(\tau)})(x,t)=u^{k-1}(x)$ for $x\in\dom$, 
$t\in((k-1)\tau,k\tau]$.
Then, summing \eqref{ex.approx2} over $k$, we obtain
\begin{align*}
  \frac{1}{\tau}\int_0^T\int_\dom & (u^{(\tau)}-S_\tau u^{(\tau)})\cdot\phi dxdt
	+ \sigma\sum_{i=1}^n\int_0^T\int_\dom\na u_i^{(\tau)}\cdot\na\phi_i dxdt \\
	&= -\sum_{i=1}^n\int_0^T\int_\dom u^{(\tau)}_i\na p_i[u^{(\tau)}]\cdot\na\phi_i dxdt,
\end{align*}
for piecewise constant functions $\phi:(0,T)\to W^{1,r_3}(\dom;\R^n)$ 
and, by density, for all functions $\phi\in L^2(0,T;W^{1,r_3}(\dom;\R^n))$. 
Summing the entropy inequality \eqref{ex.ei0} from $k=1,\ldots,N$, it follows that
\begin{equation}\label{ex.ulog}
  H_1(u^{(\tau)}(T))
	+ 4\sigma\sum_{i=1}^n\int_0^T\pi_i\|\na(u_i^{(\tau)})^{1/2}\|_{L^2(\dom)}^2 dt
	\le H_1(u^0).
\end{equation}

These bounds allow us to derive further estimates. Since the $L^1\log L^1$
bound dominates the $L^1(\dom)$ norm, we deduce from the Poincar\'e--Wirtinger
inequality that
$$
  \|u_i^{(\tau)}\log u_i^{(\tau)}\|_{L^\infty(0,T;L^1(\dom))}
	+ \|(u_i^{(\tau)})^{1/2}\|_{L^2(0,T;H^1(\dom))} \le C(u^0), \quad i=1,\ldots,n.
$$
This implies, by the Gagliardo--Nirenberg inequality with $\theta=d/(d+2)$, that
\begin{align}
  \|u_i^{(\tau)}\|_{L^{1+2/d}(Q_T)}^{1+2/d}
	&= \int_0^T\|(u_i^{(\tau)})^{1/2}\|_{L^{2+4/d}(\dom)}^{2+4/d}dt \nonumber \\
	&\le C\int_0^T\|(u_i^{(\tau)})^{1/2}\|_{H^1(\dom)}^{\theta(2d+4)/d}
	\|(u_i^{(\tau)})^{1/2}\|_{L^2(\dom)}^{(1-\theta)(2d+4)/d}dt \nonumber \\
	&\le C\|u_i^{(\tau)}\|_{L^\infty(0,T;L^1(\dom))}^{2/d}
	\int_0^T\|(u_i^{(\tau)})^{1/2}\|_{H^1(\dom)}^2 dt \le C(u^0), \label{ex.gn}
\end{align}
and by H\"older's inequality with $q=(d+2)/(d+1)$, 
\begin{align}
  \|\na u_i^{(\tau)}\|_{L^q(Q_T)} 
	&= 2\|(u_i^{(\tau)})^{1/2}\na(u_i^{(\tau)})^{1/2}\|_{L^q(Q_T)} \nonumber \\
	&\le 2\|(u_i^{(\tau)})^{1/2}\|_{L^{2+4/d}(Q_T)}
	\|\na(u_i^{(\tau)})^{1/2}\|_{L^2(Q_T)} \le C. \label{ex.nablau}
\end{align}

These bounds are not sufficient to pass to the limit $\tau\to 0$.
To derive further estimates, we use the test function 
$\phi_i=\pi_i p_i[u^k]\in W^{1,r_3}(\dom)$ in \eqref{ex.approx2}:
\begin{align}
  \frac{1}{\tau}&\sum_{i,j=1}^n\int_\dom\int_\dom\pi_i (u^k_i(x)-u_i^{k-1}(x))
	K_{ij}(x-y)u_j^k(y)dxdy \nonumber \\
	&{}+ \sigma\sum_{i,j=1}^n\int_\dom\int_\dom\pi_i K_{ij}(x-y)
	\na u_i^k(x)\cdot\na u_j^k(y) dxdy
  = -\sum_{i=1}^n\int_\dom\pi_i u_i^k|\na p_i[u^k]|^2 dx. \label{ex.aux1}
\end{align}
Because of the positive definiteness of $\pi_i K_{ij}$, the second term 
on the left-hand side is nonnegative.
Exploiting the symmetry and positive definiteness of $\pi_i K_{ij}$ 
(Hypotheses (H3)--(H4)), the first integral can be estimated from below as
\begin{align*}
  \frac{1}{2\tau}&\sum_{i,j=1}^n\int_\dom\int_\dom\pi_i K_{ij}(x-y)
	\Big(u_i^k(x)u_j^k(y) - u_i^{k-1}(x)u_j^{k-1}(y) \\
	&\phantom{xx}{}+ (u_i^k(x)-u_i^{k-1}(x))
	(u_j^k(y)-u_j^{k-1}(y))\Big)dxdy \\
	&\ge \frac{1}{2\tau}\sum_{i,j=1}^n\int_\dom\int_\dom\pi_i K_{ij}(x-y)
	\big(u_i^k(x)u_j^k(y) - u_i^{k-1}(x)u_j^{k-1}(y)\big)dxdy \\
	&= \frac{1}{\tau}(H_2(u^k) - H_2(u^{k-1})).
\end{align*}
Therefore, we infer from \eqref{ex.aux1} that
$$
  H_2(u^k) + \tau\sum_{i=1}^n\int_\dom\pi_i u_i^k|\na p_i[u^k]|^2 dx
	\le H_2(u^{k-1}),
$$
and summing this inequality from $k=1,\ldots,N$, we have
\begin{equation}\label{ex.up}
  H_2(u^{(\tau)}(T)) 
	+ \sum_{i=1}^n\pi_i\int_0^T\big\|(u_i^{(\tau)})^{1/2}\na p_i[u^{(\tau)}]
	\big\|_{L^2(\dom)}^2 dt \le H_2(u^0).
\end{equation}

The previous bound allows us to derive an estimate for the discrete time derivative.
Indeed, estimates \eqref{ex.gn} and \eqref{ex.up} imply that
$$
  u_i^{(\tau)}\na p_i[u^{(\tau)}] = (u_i^{(\tau)})^{1/2}\cdot
	(u_i^{(\tau)})^{1/2}\na p_i[u^{(\tau)}],
$$
is uniformly bounded in $L^q(Q_T)$, where $q=(d+2)/(d+1)$.
Let $\phi\in L^{q'}(0,T;W^{2,(d+2)/2}(\dom))$, where $q'=d+2$. Then $1/q+1/q'=1$
and
\begin{align*}
  \frac{1}{\tau}&\bigg|\int_0^T\int_\dom(u^{(\tau)}-S_\tau u^{(\tau)})
	\cdot\phi dxdt\bigg| \\
	&\le \sigma\sum_{i=1}^n\|u_i^{(\tau)}\|_{L^{1+2/d}(Q_T)}
	\|\Delta\phi_i\|_{L^{(d+2)/2}(Q_T)} 
	+ \sum_{i=1}^n\|u_i^{(\tau)}\na p_i[u^{(\tau)}]\|_{L^q(Q_T)}
	\|\na\phi_i\|_{L^{q'}(Q_T)} \\
	&\le C\|\phi\|_{L^{q'}(0,T;W^{2,(d+2)/2}(\dom))}.
\end{align*}
We conclude that
\begin{equation}\label{ex.time}
  \tau^{-1}\|u^{(\tau)}-S_\tau u^{(\tau)}\|_{L^q(0,T;W^{2,(d+2)/2}(\dom)')}
	\le C.
\end{equation}

{\em Step 4: Limit $\tau\to 0$.}
Estimates \eqref{ex.ulog} and \eqref{ex.time} allow us to apply the 
Aubin--Lions compactness lemma in the version of \cite{BCJ20} to conclude the
existence of a subsequence that is not relabeled such that, as $\tau\to 0$,
$$
  u_i^{(\tau)} \to u_i \quad\mbox{strongly in }L^2(0,T;L^{d/(d-1)}(\dom)),\ 
	i=1,\ldots,n,
$$
if $d\ge 2$ and strongly in $L^2(0,T;L^r(\dom))$ for any $r<\infty$ if $d=1$. 
Strictly speaking, the version of \cite{BCJ20} holds for the continuous time
derivative, but the technique of \cite{DrJu12} shows that the conclusion of
\cite{BCJ20} also holds for the discrete time derivative. 
Then, maybe for another subsequence, $u_i^{(\tau)}\to u_i$ a.e.\ in $Q_T$, and
we deduce from \eqref{ex.gn} that $u_i^{(\tau)}\to u_i$
strongly in $L^r(Q_T)$ for all $r<1+2/d$ (see Lemma \ref{lem.conv} in Appendix
\ref{aux}). Furthermore, we obtain
from \eqref{ex.ulog}, \eqref{ex.nablau}, \eqref{ex.up}, and \eqref{ex.time} 
the convergences
\begin{align*}
  \na u_i^{(\tau)}\rightharpoonup \na u_i
	&\quad\mbox{weakly in }L^q(Q_T),\ i=1,\ldots,n, \\
  \tau^{-1}(u^{(\tau)}-S_\tau u^{(\tau)})\rightharpoonup \pa_t u_i
	&\quad\mbox{weakly in }L^q(0,T;W^{2,(d+2)/2}(\dom)'), \\
	(u_i^{(\tau)})^{1/2}\na p_i[u^{(\tau)}] \rightharpoonup z_i
	&\quad\mbox{weakly in }L^2(Q_T),
\end{align*}
where $z_i\in L^2(Q_T)$ and $q=(d+2)/(d+1)$. Since $u_i^{(\tau)}\ge 0$, we infer 
that $u_i\ge 0$ in $Q_T$.

{\em Step 5: Identification of the limit.}
We need to identify $z_i$ with $u_i^{1/2}\na p_i[u]$. We show first that 
\begin{equation*}
  \na p_i[u^{(\tau)}]\rightharpoonup \na p_i[u] \quad\mbox{weakly in }L^q(Q_T).
\end{equation*}
Indeed, it follows from the strong convergence of $(u_i^{(\tau)})$ that 
(up to a subsequence) $K_{ij}(x-y)$ $\times u_j^{(\tau)}(y,t)\to K_{ij}(x-y)u_j(y,t)$ 
for a.e.\ $(y,t)\in Q_T$ and for a.e.\ $x\in\dom$. Hence, because of the 
uniform bounds, $p_i[u^{(\tau)}]\to p_i[u]$ a.e.\ in $Q_T$.
We deduce from Young's convolution inequality and 
the uniform bound for $\na u_i^{(\tau)}$ in $L^q(Q_T)$ that 
$\na p_i[u^{(\tau)}]$ is uniformly bounded in $L^q(Q_T)$. 
(Here, we only need $K_{ij}\in L^1(\dom)$. The estimate for $\na p_i[u^{(\tau)}]$
is better under Hypothesis (H2), but the time regularity cannot be improved.)
Therefore,
$$
  \na p_i[u^{(\tau)}]\rightharpoonup\na p_i[u]\quad\mbox{weakly in }L^q(Q_T).
$$

When $d=2$, we have the convergences $\na p_i[u^{(\tau)}]\rightharpoonup\na p_i[u]$
weakly in $L^{4/3}(Q_T)$ and $(u_i^{(\tau)})^{1/2}\to u_i^{1/2}$ strongly in 
$L^4(Q_T)$, which is sufficient to pass to the limit in the product and to identify
it with $z_i$. 
However, this argument fails when $d>2$, and we need a more sophisticated proof.
The div-curl lemma does not apply, since the exponents of the Lebesgue 
spaces, in which the convergences of $(u_i^{(\tau)})^{1/2}$ and $\na p_i[u^{(\tau)}]$
take place, are not conjugate for $d>2$. 
Also the generalization \cite[Theorem 2.1]{BrDi16}
to nonconjugate exponents cannot be used for general $d$.

Our idea is to exploit the fact that the product converges in a space better
than $L^1$. Then Lemma \ref{lem.ident} in Appendix \ref{aux} immediately implies that
$$
  (u_i^{(\tau)})^{1/2}\na p_i[u^{(\tau)}]\rightharpoonup u_i^{1/2}\na p_i[u]
	\quad\mbox{weakly in }L^q(Q_T).
$$
In fact, estimate \eqref{ex.up} implies that this convergence holds in $L^2(Q_T)$.
Then the strong convergence of $(u_i^{(\tau)})^{1/2}$ in $L^{2}(Q_T)$ gives
$$
  u_i^{(\tau)}\na p_i[u^{(\tau)}]\rightharpoonup u_i\na p_i[u]
	\quad\mbox{weakly in }L^1(Q_T).
$$
In view of the uniform bounds
for $(u_i^{(\tau)})^{1/2}$ in $L^{2+4/d}(Q_T)$ and of $(u_i^{(\tau)})^{1/2}
\na p_i[u^{(\tau)}]$ in $L^2(Q_T)$, the product $u_i^{(\tau)}\na p_i[u^{(\tau)}]$
is uniformly bounded in $L^q(Q_T)$. Thus, the previous weak convergence also
holds in $L^q(Q_T)$. 

{\em Step 6: End of the proof.}
The convergences of the previous step allow us to pass to the limit 
$\tau\to 0$ in \eqref{ex.approx2}, yielding
$$
  \int_0^T\langle \pa_t u_i,\phi_i\rangle dt
	+ \sigma\int_0^T\int_\dom\na u_i\cdot\na\phi_i dxdt
	= -\int_0^T\int_\dom u_i\na p_i[u]\cdot\na\phi_i dxdt,
$$
for all smooth test functions. Because of $\na u_i$, $u_i\na p_i[u]\in L^q(Q_T)$, 
a density argument
shows that the weak formulation holds for all $\phi\in L^{q'}(0,T;W^{1,q'}(\dom))$,
recalling that $q'=d+2$. Then $\pa_t u_i$ lies in the space
$L^q(0,T;W^{-1,q}(\dom))$, where $W^{-1,q}(\dom):=W^{1,q'}(\dom)'$.
The proof that $u(\cdot,0)$ satisfies the initial datum
can be done exactly as in \cite[p.~1980]{Jue15}. Finally, using the convexity
of $H_1$ and the lower semi-continuity of convex functions, the entropy inequalities
\eqref{ex.ulog} and \eqref{ex.up} become \eqref{ex.H1} and \eqref{ex.H2}, 
respectively, in the limit $\tau\to 0$.
This ends the proof of Theorem \ref{thm.ex}.

\begin{remark}[Whole space and bounded domains]\label{rem.domain}\rm
We believe that the whole space $\Omega=\R^d$ can be treated by using the
techniques of \cite{CHJZ21}, under
the assumption that a moment of $u^0$ is bounded, i.e.\
$\int_{\R^d}u_i^0(x)(1+|x|^2)^{\alpha/2}dx<\infty$ for a suitable $\alpha>0$.
Indeed, standard estimates show that $u_i^\eps(1+|x|^2)^{\alpha/2}$ is bounded
in $L^\infty(0,T;L^1(\R^d))$, where $(u_i^\eps)$ is a sequence of
approximating solutions. By the previous proof, $(\sqrt{u_i^\eps})$ 
is bounded in $L^2(0,T;H^1(\R^d))$, and estimate \eqref{ex.nablau} shows that
$(u_i^\eps)$ is bounded in $L^2(0,T;W^{1,q}(\R^d))$
with $q=(d+2)/(d+1)$. Since $W^{1,q}(\R^d)\cap L^1(\R^d;(1+|x|^2)^{\alpha/2})$ 
is compactly embedded in $L^r(\R^d)$ for $r<2q/(2-q)$ and $1\le q<2$
(by adapting the proof of \cite[Lemma 1]{CGZ20}), 
we can apply the Aubin--Lions lemma, concluding that, 
up to a subsequence, $u_i^\eps\to u$ strongly in $L^2(0,T;L^2(\R^d))$.

The case of bounded domains $\Omega\subset\R^d$ with Lipschitz boundary
$\pa\Omega$ seems to be more delicate. We assume no-flux boundary conditions
to recover the weak formulation \eqref{ex.wf}. The problem comes from the treatment
of the boundary integrals when integrating by parts. For instance, we need to
integrate by parts in $\na p_i[u]$ (see \eqref{ex.ibp}) and to control the integral
$$
  q_i[u](x) := \sum_{j=1}^n\int_{\pa\Omega} K_{ij}(x-y)u_j(y)\nu(y)dy,
$$
where $\nu$ is the exterior unit normal vector of $\pa\Omega$.
If $K_{ij}\in L^\infty(\R^d)$, we may estimate this integral by
$\|u_i\|_{L^1(\pa\Omega)}\le C\|u_i\|_{W^{1,1}(\Omega)}$. Consequently,
$\|q_i[u]\|_{L^\infty(\Omega)}\le C\sum_{j=1}^n\|u_j\|_{W^{1,1}(\Omega)}$.
The integral $q_i[u]$ appears in the weak formulation, for instance, as
$$
  \bigg|\sigma\int_\Omega\na u_i\cdot q_i[u]dx\bigg|
	\le C\sum_{j=1}^n\|\na u_j\|_{W^{1,1}(\Omega)}^2
	\le 2C\sum_{j=1}^n\|u_j\|_{L^1(\Omega)}\|\na u_j^{1/2}\|_{L^2(\Omega)}^2,
$$
and this integral cannot generally be absorbed by the corresponding term
in \eqref{ex.ei0} except if $\|u_j^0\|_{L^1(\Omega)}$ 
is sufficiently small.
\qed\end{remark}

\subsection*{Proof of Proposition \ref{prop.regul}.}

The proof of the $H^1(\dom)$ regularity requires an approximate scheme
different from that one used in the proof of Theorem \ref{thm.ex}. 
Given $u^{k-1}\in L^2(\dom;\R^n)$ with $u_i^{k-1}\ge 0$, 
we wish to find $u^k\in H^1(\dom;\R^n)$ such that
\begin{align}\label{ex.approx3}
  \frac{1}{\tau}&\int_{\dom}(u_i^k-u_i^{k-1})\phi_i dx
	+ \sigma\int_\dom\na u_i^k\cdot\na\phi_i dx
	+ \int_\dom\frac{(u_i^k)^+}{1+\delta(u_i^k)^+}\na p_i[u^k]\cdot\na\phi_i dx = 0,
\end{align}
for $\phi_i\in H^1(\dom)$, where $\delta>0$ and $z^+=\max\{0,z\}$.  
Since $\na K_{ij}\in L^{d+2}(\dom)$, $\na p_i[u^k]$ can be bounded in $L^{d+2}(\dom)$
in terms of the $L^1(\dom)$ norm of $u^k$. Thus, the last term on the left-hand side
is well defined. The existence of a solution 
to this discrete scheme is proved by a fixed-point argument, and the main step
is the derivation of uniform estimates. First, we observe that the test
function $(u_i^k)^-=\min\{0,u_i^k\}$ yields
$$
  \frac{1}{\tau}\int_{\dom}(u_i^k-u_i^{k-1})(u_i^k)^- dx
	+ \sigma\int_\dom|\na (u_i^k)^-|^2 dx
	= -\int_\dom\frac{(u_i^k)^+}{1+\delta(u_i^k)^+}\na p_i[u^k]\cdot\na (u_i^k)^- dx 
	= 0,
$$
and consequently, $(u_i^k)^-=0$ in $\dom$. Thus, $u_i^k\ge 0$ and we can remove
the plus sign in \eqref{ex.approx3}. Second, we use the test function $u_i^k$
in \eqref{ex.approx3} and sum over $i=1,\ldots,n$:
\begin{equation}\label{ex.sum}
  \frac{1}{\tau}\sum_{i=1}^n\int_\dom(u_i^k-u_i^{k-1})u_i^k dx
	+ \sigma\sum_{i=1}^n\int_\dom|\na u_i^k|^2 dx 
	= -\sum_{i=1}^n\int_\dom \frac{u_i^k}{1+\delta u_i^k}\na p_i[u^k]\cdot\na u_i^k dx.
\end{equation}
The first integral becomes
$$
  \sum_{i=1}^n\int_\dom(u_i^k-u_i^{k-1})u_i^k dx
	\ge \frac12\sum_{i=1}^n\int_\dom\big((u_i^k)^2 - (u_i^{k-1})^2\big)dx.
$$
The right-hand side in \eqref{ex.sum} is estimated by H\"older's inequality
and Young's convolution inequality:
\begin{align*}
  -\sum_{i=1}^n&\int_\dom\frac{u_i^k}{1+\delta u_i^k}\na p_i[u^k]\cdot\na u_i^k dx
	\le \sum_{i=1}^n\|u_i^k\|_{L^{2+4/d}(\dom)}\|\na p_i[u^k]\|_{L^{d+2}(\dom)}
	\|\na u_i^k\|_{L^2(\dom)} \\
	&\le C_K\sum_{i,j=1}^n\|u_i^k\|_{L^{2+4/d}(\dom)}\|u_j^k\|_{L^1(\dom)}
	\|\na u_i^k\|_{L^2(\dom)},
\end{align*}
where $C_K>0$ depends on the $L^{d+2}(\dom)$ norm of $\na K_{ij}$. Taking the
test function $\phi_i=1$ in \eqref{ex.approx3} shows that 
$\|u_i^k\|_{L^1(\dom)}=\|u_i^0\|_{L^1(\dom)}$ is uniformly bounded. 
This allows us to reduce the cubic expression on the right-hand side of the
previous inequality to a quadratic one. This is the key idea of the proof.
Combining the previous arguments, \eqref{ex.approx3} becomes
\begin{align*}
  \frac12\sum_{i=1}^n&\int_\dom(u_i^k)^2dx - \frac12\sum_{i=1}^n\int_\dom(u_i^{k-1})^2dx
	+ \tau\sigma\sum_{i=1}^n\int_\dom|\na u_i^k|^2dx \\
	&\le \tau C\sum_{i=1}^n\|u_i^k\|_{L^{2+4/d}(\dom)}\|\na u_i^k\|_{L^2(\dom)} \\
	&\le \frac{1}{2}\tau\sigma\sum_{i=1}^n\|\na u_i^k\|_{L^2(\dom)}^2
	+ \tau C\sum_{i=1}^n\|u_i^k\|_{L^{2+4/d}(\dom)}^2.
\end{align*}
The Gagliardo--Nirenberg and Poincar\'e--Wirtinger inequalities show that
\begin{align*}
  \|u_i^k\|_{L^{2+4/d}(\dom)}^2
	&\le C\|u_i^k\|_{H^1(\dom)}^{2\theta}\|u_i^k\|_{L^1(\dom)}^{2(1-\theta)} \\
  &\le C\big(\|\na u_i^k\|_{L^2(\dom)} + \|u_i^k\|_{L^1(\dom)}\big)^{2\theta}
	\|u_i^k\|_{L^1(\dom)}^{2(1-\theta)} \\
	&\le C(u^0)\|\na u_i^k\|_{L^2(\dom)}^{2\theta} + C(u^0),
\end{align*}
where $\theta=d(d+4)/(d+2)^2<1$. We deduce from Young's inequality that
for any $\eps>0$,
$$
  \|u_i^k\|_{L^{2+4/d}(\dom)}^2 \le \eps\|\na u_i^k\|_{L^2(\dom)}^2 + C(\eps).
$$
Therefore, choosing $\eps>0$ sufficiently small, we infer that
\begin{equation}\label{ex.u2}
  \frac12\sum_{i=1}^n\int_\dom(u_i^k)^2dx - \frac12\sum_{i=1}^n\int_\dom(u_i^{k-1})^2dx
	+ \frac14\tau\sigma\sum_{i=1}^n\int_\dom|\na u_i^k|^2dx \le C.
\end{equation}
This provides a uniform $H^1(\dom)$ estimate for $u^k$. 
Defining the fixed-point operator as a mapping from $L^2(\dom)$ to $L^2(\dom)$,
the compact embedding $H^1(\dom)\hookrightarrow L^2(\dom)$ implies the compactness 
of this operator (see \cite[Chapter 4]{Jue16} for details).
This shows that \eqref{ex.approx3} possesses a weak solution $u^k\in H^1(\dom)$.

In order to pass to the limit $(\delta,\tau)\to 0$, we need uniform estimates
for the piecewise constant in time functions $u_i^{(\tau)}$, using the
same notation as in the proof of Theorem \ref{thm.ex}. Estimate \eqref{ex.u2}
provides uniform bounds for $u_i^{(\tau)}$ in $L^\infty(0,T;L^2(\dom))$ and
$L^2(0,T;H^1(\dom))$. By the Gagliardo--Nirenberg inequality,
$(u_i^{(\tau)})$ is bounded in $L^{2+4/d}(Q_T)$. By Young's convolution
inequality, 
$$
  \sup_{t\in(0,T)}\|\na p_i[u^{(\tau)}(t)]\|_{L^\infty(\dom)}
	\le \sum_{j=1}^n\|\na K_{ij}\|_{L^{d+2}(\dom)}
	\sup_{t\in(0,T)}\|u_j^{(\tau)}\|_{L^q(\dom)} \le C,
$$
where $q=(d+2)/(d+1)$. Thus, 
$(\na p_i[u^{(\tau)}])$ is bounded in $L^\infty(0,T;L^\infty(\dom))$.
From these estimates, we can derive a uniform bound for the discrete time
derivative. Therefore, by the Aubin--Lions lemma \cite{DrJu12},
up to a subsequence, as $(\delta,\tau)\to 0$,
$$
  u_i^{(\tau)}\to u_i\quad\mbox{strongly in }L^2(Q_T),
$$
and this convergence even holds in $L^r(Q_T)$ for any $r<2+4/d$. We can show as
in the proof of Theorem \ref{thm.ex} that $p_i[u^{(\tau)}]\to p_i[u]$ a.e.\
and consequently, for a subsequence, $\na p_i[u^{(\tau)}]\rightharpoonup
\na p_i[u]$ weakly in $L^2(Q_T)$. We infer that
$$
  u_i^{(\tau)}\na p_i[u^{(\tau)}]\rightharpoonup u_i\na p_i[u]
	\quad\mbox{weakly in }L^1(Q_T).
$$
Omitting the details, it follows that $u=(u_1,\ldots,u_n)$ is a weak solution
to \eqref{1.eq}--\eqref{1.nonloc} satisfying $u_i\in L^2(0,T;H^1(\dom))$
for $i=1,\ldots,n$.

Next, we show the lower and upper bounds for $u_i$. Define $M(t)=M_0e^{\lambda t}$,
where $\lambda>0$ will be specified later. Since $\na K_{ij}\in L^\infty$,
we can apply the Young convolution inequality and
estimate $\na p_i[u]$ in $L^\infty(\dom)$ in terms of the $L^1(\dom)$ bounds
for $u_j$. Then, with the test function
$e^{-\lambda t}(u_i-M)^+(t)=e^{-\lambda t}\max\{0,(u_i-M)(t)\}$ 
in the weak formulation of \eqref{ex.approx3}, we deduce from
$$
  \pa_t u_i e^{-\lambda t}(u_i-M)^+
	= \frac12\pa_t\big\{e^{-\lambda t}[(u_i-M)^+]^2\big\}
	+ \frac{\lambda}{2}e^{-\lambda t}[(u_i-M)^+]^2 + \lambda e^{-\lambda t}M(u_i-M)^+,
$$
that
\begin{align*}
  \frac12&\int_\dom e^{-\lambda t}(u_i-M)^+(t)^2 dx 
	+ \sigma\int_0^t\int_\dom e^{-\lambda s}|\na(u_i-M)^+|^2 dxds \\
	&= -\int_0^t\int_\dom e^{-\lambda s}(u_i-M)\na p_i[u]\cdot
	\na(u_i-M)^+dxds - \frac{\lambda}{2}\int_0^t\int_\dom 
	e^{-\lambda s}[(u_i-M)^+]^2 dxds \\
	&\phantom{xx}{}- \int_0^t\int_\dom e^{-\lambda s}M\na p_i[u]\cdot\na(u_i-M)^+dxds
	- \lambda\int_0^t\int_\dom e^{-\lambda s}M(u_i-M)^+ dxds.
\end{align*}
We write $(u_i-M)\na(u_i-M)^+=\frac12\na[(u_i-M)^+]^2$ and integrate by parts
in the first and third terms of the right-hand side:
\begin{align*}
  \frac12&\int_\dom e^{-\lambda s}(u_i-M)^+(t)^2 dx 
	+ \sigma\int_0^t\int_\dom e^{-\lambda s}|\na(u_i-M)^+|^2 dxds \\
	&\le \frac12\big(\|\Delta p_i[u]\|_{L^\infty(0,T;L^\infty(\dom))}-\lambda\big)
	\int_0^t\int_\dom e^{-\lambda s}[(u_i-M)^+]^2 dxds \\
	&\phantom{xx}{}+ \big(\|\Delta p_i[u]\|_{L^\infty(0,T;L^\infty(\dom))}
	-\lambda\big)\int_0^t\int_\dom e^{-\lambda s}M(u_i-M)^+ dxds.
\end{align*}
By Young's convolution inequality and the regularity assumptions on $K_{ij}$,
$$
	\|\Delta p_i[u]\|_{L^\infty(0,T;L^\infty(\dom))} 
	\le C\sum_{j=1}^n\|u_j\|_{L^\infty(0,T;L^1(\dom))} \le C_0.
$$
Therefore, choosing $\lambda\ge C_0$, it follows that
$$
  \int_\dom e^{-\lambda t}(u_i-M)^+(t)^2 dx \le 0,
$$ 
and we infer that $e^{-\lambda t}(u_i-M)^+(t)=0$ and 
$u_i(t)\le M(t)=M_0 e^{\lambda t}$ for $t>0$.
The inequality $u_i(t)\ge m(t):=m_0e^{-\lambda t}$ is proved in the same way, using
the test function $e^{-\lambda t}(u_i-m)^-=e^{-\lambda t}\min\{0,u_i-m\}$.

\begin{remark}\rm\label{rem.kernel}
Proposition \ref{prop.regul} holds true for functions
$K_{ij}(x,y)$ that are not convolution kernels. We need the
regularity $\na_x K_{ij}\in L_y^\infty L_x^{d+2}\cap L_x^\infty L_y^{d+2}$
to apply the Young inequality for kernels; see \cite[Theorem 0.3.1]{Sog93}.
For the lower and upper bounds of the solution, we additionally need the 
regularity $\na_x K_{ij}$, $\Delta_x K_{ij} \in L_x^\infty L_y^\infty$.
\qed\end{remark}


\section{Weak-strong uniqueness for the nonlocal system}\label{sec.wsn}

In this section, we prove Theorem \ref{thm.ws}. 
Let $u$ be a nonnegative weak solution
and $v$ be a positive ``strong'' solution to \eqref{1.eq}--\eqref{1.nonloc},
i.e., $v=(v_1,\ldots,v_n)$ is a weak solution to \eqref{1.eq}--\eqref{1.nonloc} 
satisfying 
$$
  0<c\le v_i\le C\quad\mbox{in }Q_T,
	\quad \pa_t v_i\in L^2(0,T;H^{-1}(\dom)), 
	\quad v_i\in L^\infty(0,T;W^{1,\infty}(\dom)).
$$
In particular, we have $\log v_i$, $\na\log v_i\in L^\infty(0,T;L^\infty(\dom))$.
Then, for $0 < \eps < 1$, we define the regularized relative entropy density
$$
  h_{\eps}(u|v) = \sum_{i=1}^n\pi_i\big((u_i+\eps)(\log(u_i+\eps)-1)
	- (u_i+\eps)\log v_i + v_i\big),
$$
and the associated relative entropy
$$
  H_{\eps}(u|v) = \int_\dom h_{\eps}(u|v)dx.
$$

{\em Step 1: Preparations.} We compute
$$
  \frac{\pa h_{\eps}}{\pa u_i}(u|v) = \pi_i\log(u_i+\eps) - \pi_i\log v_i, \quad
  \frac{\pa h_{\eps}}{\pa v_i}(u|v) = -\pi_i\frac{u_i+\eps}{v_i} + \pi_i.
$$ 
The second function is an admissible test function for the weak formulation of
\eqref{1.eq}, satisfied by $v_i$, since $\na u_i\in L^2(Q_T)$ and
$\na v_i\in L^\infty(Q_T)$.  
Strictly speaking, the first function is not an admissible test function for
the weak formulation of \eqref{1.eq}, satisfied by $u_i$, since it needs test functions in $W^{1,d+2}(\dom)$. However, the nonlocal term becomes with this test function
$$
  \int_\dom\int_\dom K_{ij}(x-y)\na u_j(y)\cdot\frac{\na u_i(x)}{u_i(x)+\eps}dxdy,
$$
which is finite since $K_{ij}$ is essentially bounded and 
$\na u_i\cdot\na u_j\in L^1(Q_T)$. Thus, by a suitable approximation argument, 
the following computation can be made rigorous. We find that
\begin{align*}
  \frac{d}{dt}H_{\eps}(u|v) &= \sum_{i=1}^n\bigg(
	\bigg\langle\pa_t u_i,\frac{\pa h_{\eps}}{\pa u_i}(u|v)\bigg\rangle
  + \bigg\langle\pa_t v_i,\frac{\pa h_{\eps}}{\pa v_i}(u|v)\bigg\rangle\bigg) \\
	&= -\sigma\sum_{i=1}^n\int_\dom\bigg(\na u_i\cdot\na
	\frac{\pa h_{\eps}}{\pa u_i}(u|v) + \na v_i\cdot\na 
	\frac{\pa h_{\eps}}{\pa v_i}(u|v)\bigg)dx \\
	&\phantom{xx}{} -\sum_{i=1}^n\int_\dom\bigg(u_i\na p_i[u]\cdot
	\na\frac{\pa h_{\eps}}{\pa u_i}(u|v)
	+ v_i\na p_i[v]\cdot\na \frac{\pa h_{\eps}}{\pa v_i}(u|v)\bigg)dx \\
	&= -\sigma\sum_{i=1}^n\int_\dom\pi_i\bigg|\frac{\na u_i}{\sqrt{u_i+\eps}}
	- \sqrt{u_i+\eps}\frac{\na v_i}{v_i}\bigg|^2dx \\
	&\phantom{xx}{}-\sum_{i=1}^n\int_\dom\pi_i
	\bigg(\frac{u_i}{u_i+\eps}\na p_i[u]\cdot\na u_i
	- \frac{u_i}{v_i}\na p_i[u]\cdot\na v_i - \na p_i[v]\cdot\na u_i \\
	&\phantom{xx}{}+ \frac{u_i+\eps}{v_i}\na p_i[v]\cdot\na v_i\bigg)dx.
\end{align*}
The first integral is nonpositive. Thus, an integration over $(0,t)$ gives
\begin{align}
  H_{\eps}&(u(t)|v(t)) - H_\eps(u(0)|v(0)) \nonumber \\ 
	&\le -\sum_{i=1}^n\int_0^t\int_\dom\pi_i
	(u_i+\eps)\na(p_i[u]-p_i[v])\cdot\na\log\frac{u_i+\eps}{v_i} dxds \nonumber \\
	&\phantom{xx}{}+ \eps\sum_{i=1}^n\int_0^t\int_\dom\pi_i\na p_i[u]\cdot
	\na\log\frac{u_i+\eps}{v_i} dxds =: I_1 + I_2. \label{ws.dHdt}
\end{align}

{\em Step 2: Estimation of $I_1$ and $I_2$.}
Inserting the definition of $p_i$,
\begin{align*}
  \na(p_i[u]-p_i[v])(x) &= \sum_{j=1}^n\int_\dom K_{ij}(x-y)\na(u_j-v_j)(y)dy \\
	&= \sum_{j=1}^n\int_\dom K_{ij}(x-y)\bigg(
	(u_j+\eps)(y)\na\log\frac{u_j+\eps}{v_j}(y) \\
	&\phantom{xx}{}+ (u_j-v_j)(y)\na\log v_j(y) + \eps\na\log v_j(y)\bigg)dy,
\end{align*}
leads to
\begin{align*}
  I_1 &= -\sum_{i,j=1}^n\int_0^t\int_\dom\int_\dom\pi_i K_{ij}(x-y)
	\bigg((u_i+\eps)(x)(u_j+\eps)(y)
	\na\log\frac{u_j+\eps}{v_j}(y) \\
	&\phantom{xx}{}\times\na\log\frac{u_i+\eps}{v_i}(x)
	+ (u_i+\eps)(x)(u_j-v_j)(y)\na\log v_j(y)\cdot\na\log\frac{u_i+\eps}{v_i}(x)
	\bigg)dxdyds \\
	&\phantom{xx}{}
	- \eps\sum_{i,j=1}^n\int_0^t\int_\dom\int_\dom\pi_i K_{ij}(x-y)
	(u_i+\eps)(x)\na\log v_j(y)\cdot\na\log\frac{u_i+\eps}{v_i}(x)dxdyds \\
	&=: I_{11} + I_{12}.
\end{align*}
Setting
$$
  U_i = (u_i+\eps)\na\log\frac{u_i+\eps}{v_i}, \quad
	V_i = \frac12(u_i-v_i)\na\log v_i,
$$
we can formulate the first integral as
\begin{align*}
  I_{11} &= -\sum_{i,j=1}^n\int_0^t\int_\dom\int_\dom\pi_i K_{ij}(x-y)
	\big(U_i(x)\cdot U_j(y) + 2U_i(x)\cdot V_j(y)\big)dxdyds \\
	&= -\sum_{i,j=1}^n\int_0^t\int_\dom\int_\dom\pi_i K_{ij}(x-y)
	(U_i+V_i)(x)\cdot(U_j+V_j)(y)dxdyds \\
	&\phantom{xx}{}
	+ \sum_{i,j=1}^n\int_0^t\int_\dom\int_\dom\pi_i K_{ij}(x-y) 
	V_i(x)\cdot V_j(y)dxdyds \\
	&\le \frac14\sum_{i,j=1}^n\int_0^t\int_\dom\int_\dom\pi_i K_{ij}(x-y)
	(u_i-v_i)(x)(u_j-v_j)(y)\na\log v_i(x)\cdot\na\log v_j(y)dxdyds \\
	&\le \frac14\max_{i,j=1,\ldots,n}\|\pi_i K_{ij}\|_{L^\infty(\dom)}
	\max_{k=1,\ldots,n}\|\na\log v_k\|_{L^\infty(Q_T)}^2 \\
	&\phantom{xx}{}\times\sum_{i,j=1}^n
	\int_0^t\int_\dom|(u_i-v_i)(x)|dx\int_\dom|(u_j-v_j)(y)|dyds \\
	&\le C\sum_{i=1}^n\int_0^t\bigg(\int_\dom|u_i-v_i|dx\bigg)^2 ds,
\end{align*}
using the symmetry and positive definiteness of $\pi_i K_{ij}$ as well as
the regularity assumptions on $K_{ij}$ and $\na\log v_i$. 
The second integral $I_{12}$ is estimated as
\begin{align*}
  I_{12} &= -\eps\sum_{i,j=1}^n\int_0^t\int_\dom\int_\dom\pi_i K_{ij}(x-y)
	\na\log v_j(y)\cdot\big(\na u_i - (u_i+\eps)\na\log v_i\big)(x)dxdyds \\
	&\le \eps C\sum_{i,j=1}^n\|\na\log v_j\|_{L^\infty(Q_T)}
	\int_0^t\int_\dom\big(|\na u_i|+(u_i+1)|\na \log v_i|\big)dxds \\
	&\le \eps C\sum_{i=1}^n\big(\|\na u_i\|_{L^1(Q_T)}+\|u_i\|_{L^1(Q_T)} + 1\big) 
	\le \eps C.
\end{align*}
We conclude that
$$
  I_1 \le C\sum_{i=1}^n\int_0^t\bigg(\int_\dom|u_i-v_i|dx\bigg)^2 ds
	+ \eps C.
$$

It remains to estimate $I_2$. Here we need the improved regularity
$\na u_i\in L^2(Q_T)$. Inserting the definition of $p_i[u]$, we have
$$
  I_2 = \eps\sum_{i,j=1}^n\int_0^t\int_\dom\int_\dom\pi_i K_{ij}(x-y)
	\na u_j(y)\cdot\na\log\frac{u_i+\eps}{v_i}(x)dxdyds.
$$
Since
$$
  \eps|\na u_j(y)\cdot\na\log(u_i+\eps)(x)|
	= 2\eps\bigg|\na u_j(y)\cdot\frac{\na\sqrt{u_i+\eps}}{\sqrt{u_i+\eps}}(x)\bigg|
	\le 2\sqrt{\eps}|\na u_j(y)||\na\sqrt{u_i(x)}|,
$$
we find that
$$
  I_2\le C\sum_{i,j=1}^n\big(\eps\|\na u_j\|_{L^1(Q_T)}
	+ \sqrt{\eps}\|\na u_j\|_{L^2(Q_T)}\|\na\sqrt{u_i}\|_{L^2(Q_T)}\big) 
	\le \sqrt{\eps}C.
$$

We summarize the estimates for $I_1$ and $I_2$ and conclude from \eqref{ws.dHdt} that
\begin{equation}\label{ws.Heps}
  H_\eps(u(t)|v(t)) - H_\eps(u(0)|v(0))
	\le C\sum_{i=1}^n\int_0^t\bigg(\int_\dom|u_i-v_i|dx\bigg)^2 ds
	+ \sqrt{\eps}C.
\end{equation}

{\em Step 3: Limit $\eps\to 0$.}
We perform first the limit in $H_\eps(u(t)|v(t))$. 
Since $u_i\in L^2(0,T;$ $H^1(\dom))\cap H^1(0,T;H^{-1}(\dom))\hookrightarrow
C^0([0,T];L^2(\dom))$, we have 
$$
  \big|(u_i+\eps)(\log(u_i+\eps)-1)\big|
	\le u_i(\log u_i+1) + C\in L^\infty(0,T;L^1(\dom)).
$$
Therefore, by dominated convergence, as $\eps\to 0$,
$$
  \sum_{i=1}^n\int_\dom\pi_i(u_i(t)+\eps)(\log(u_i(t)+\eps)-1)dx
	\to \sum_{i=1}^n\int_\dom\pi_i u_i(t)(\log u_i(t)-1)dx,
$$
and this convergence holds for a.e.\ $t\in(0,T)$. Furthermore, in view of the
bound for $\log v_i$,
$$
  \sum_{i=1}^n\pi_i(-(u_i+\eps)\log v_i+v_i)
	\le C(v)\bigg(\sum_{i=1}^n u_i+1\bigg)\in L^\infty(0,T;L^1(\dom)),
$$
and we can again use dominated convergence:
$$
  \sum_{i=1}^n\int_\dom\pi_i\big(-(u_i(t)+\eps)\log v_i(t)+v_i(t)\big)dx
	\to \sum_{i=1}^n\int_\dom\pi_i\big(-u_i(t)\log v_i(t)+v_i(t)\big)dx.
$$
This shows that for a.e.\ $t\in(0,T)$,
\begin{align*}
  & H_\eps(u(t)|v(t)) \to H(u(t)|v(t))\quad\mbox{as }\eps\to 0,\mbox{ where} \\
  & H(u|v) = \sum_{i=1}^n\int_\dom\pi_i\big(u_i(\log u_i-1) - u_i\log v_i+v_i\big)dx,
\end{align*}
and $H_\eps(u(0)|v(0))=H_\eps(u^0|u^0)\to 0$.
Then we deduce from \eqref{ws.Heps} in the limit $\eps\to 0$ that
\begin{equation}\label{ws.aux}
  H(u(t)|v(t)) \le  C\sum_{i=1}^n\int_0^t\|u_i-v_i\|_{L^1(\dom)}^2 ds.
\end{equation}

Taking the test function $\phi_i=1$ in the weak formulation of \eqref{1.eq},
we find that $\int_\dom u_i^0dx = \int_\dom u_i(t)dx$ for all $t>0$. 
Since $u$ and $v$ have the same initial data, it follows that 
$\int_\dom u_i(t)dx=\int_\dom v_i(t)dx$ for all $t>0$. Thus,
by the classical Csisz\'ar--Kullback--Pinsker inequality \cite[Theorem A.2]{Jue16},
we have
\begin{align*}
  H(u|v) &= \sum_{i=1}^n\int_\dom\pi_i u_i\log\frac{u_i}{v_i}dx 
	+ \sum_{i=1}^n\int_\dom\pi_i(v_i-u_i)dx \\
	&= \sum_{i=1}^n\int_\dom\pi_i u_i\log\frac{u_i}{v_i}dx
	\ge C(u^0)\sum_{i=1}^n\|u_i-v_i\|_{L^1(\dom)}^2.
\end{align*}
We infer from \eqref{ws.aux} that
$$
  \sum_{i=1}^n\|(u_i-v_i)(t)\|_{L^1(\dom)}^2 
	\le C\int_0^t\sum_{i=1}^n\|u_i-v_i\|_{L^1(\dom)}^2 ds.
$$
Gronwall's inequality implies that $\|(u_i-v_i)(t)\|_{L^1(\dom)}=0$ and hence
$u_i(t)=v_i(t)$ in $\dom$ for a.e.\ $t>0$ and $i=1,\ldots,n$.


\section{Localization limit}\label{sec.loc}

We prove Theorem \ref{thm.loc}. Let $u^\eps$ be the nonnegative weak solution
to \eqref{1.eq}--\eqref{1.nonloc} with kernel \eqref{2.K}, constructed in Theorem
\ref{thm.ex}. The entropy inequalities \eqref{ex.H1} and \eqref{ex.H2} as
well as the proof of Theorem \ref{thm.ex} show that
all estimates are independent of $\eps$.
(More precisely, the right-hand side of \eqref{ex.H2} depends on $K_{ij}^\eps$,
but in view of \cite[Theorem 4.22]{Bre10}, the right-hand side can be bounded
uniformly in $\eps$.)
Therefore, for $i=1,\ldots,n$ 
(see \eqref{ex.ulog}--\eqref{ex.nablau}, \eqref{ex.up}--\eqref{ex.time}),
\begin{align*}
  \|u_i^\eps\log u_i^\eps\|_{L^\infty(0,T;L^1(\dom))}
	+ \|u_i^\eps\|_{L^{1+2/d}(Q_T)} 
	+ \|u_i^\eps\|_{L^q(0,T;W^{1,q}(\dom))} &\le C, \\
  \|(u_i^\eps)^{1/2}\|_{L^{2}(0,T;H^1(\dom))} 
	+ \|\pa_t u_i^\eps\|_{L^q(0,T;W^{1,d+2}(\dom)')} 
	+ \|(u_i^\eps)^{1/2}\na p_i^\eps[u^\eps]\|_{L^2(Q_T)} &\le C,
\end{align*}
where $C>0$ is independent of $\eps$, $q=(d+2)/(d+1)$, and
$p_i^\eps[u_i^\eps]=\sum_{j=1}^n\int_\dom K_{ij}^\eps(x-y)u_j^\eps(y)dy$. 
We infer from the Aubin--Lions
lemma in the version of \cite{BCJ20,DrJu12} that there exists a subsequence 
(not relabeled) such that, as $\eps\to 0$,
\begin{equation}\label{loc.conv0}
  u^\eps_i\to u_i\quad\mbox{strongly in }L^2(0,T;L^{d/(d-1)}(\dom)), \quad
	i=1,\ldots,n,
\end{equation}
if $d \geq 2$ and strongly in $L^2(0,T;L^r(\dom))$ for any $r<\infty$ if $d=1$.
Moreover,
\begin{align}
  \na u_i^\eps \rightharpoonup \na u_i &\quad\mbox{weakly in }L^q(Q_T), 
	\quad i=1,\ldots,n, \label{loc.conv1} \\
  \pa_t u_i^\eps\rightharpoonup\pa_t u_i
	&\quad\mbox{weakly in }L^q(0,T;W^{1,d+2}(\dom)'), \label{loc.conv2} \\
	(u_i^\eps)^{1/2}\na p_i^\eps[u^\eps]\rightharpoonup z_i 
	&\quad\mbox{weakly in }L^2(Q_T), \label{loc.conv3}
\end{align}
where $z_i\in L^2(Q_T)$ for $i=1,\ldots,n$.

As in Section \ref{sec.ex}, the main difficulty is the identification of
$z_i$ with $u_i^{1/2}\na p_i[u]$, where $p_i[u]:=\sum_{j=1}^n a_{ij}\na u_j$.
Since the kernel functions also depend on $\eps$,
the proof is different from that one in Section \ref{sec.ex}. We claim that
\begin{equation}\label{loc.nap}
  \na p_i^\eps[u^\eps]\rightharpoonup \na p_i[u]
	\quad\mbox{weakly in }L^q(Q_T).
\end{equation}
Indeed, let $\phi\in L^{q'}(Q_T;\R^n)$, where $q'=d+2$ satisfies $1/q+1/q'=1$.
We compute
\begin{align*}
  \bigg|\int_0^T&\int_\dom(\na p_i^\eps[u^\eps] - \na p_i[u])\cdot\phi dxdt\bigg| \\
	&= \bigg|\sum_{j=1}^n\int_0^T\int_\dom\bigg(\int_\dom K_{ij}^\eps(x-y)
	\na u_j^\eps(y,t)dy\bigg)\cdot\phi(x,t)dxdt \\
	&\phantom{xx}{}- 
	\sum_{j=1}^n\int_0^T\int_\dom a_{ij}\na u_j(y,t)\cdot\phi(y,t)dydt\bigg| \\
  &\le \sum_{j=1}^n\bigg|\int_0^T\int_\dom\bigg(\int_\dom 
	K_{ij}^\eps(x-y)\phi(x,t)dx
	- a_{ij}\phi(y,t)\bigg)\cdot\na u_j^\eps(y,t)dydt\bigg| \\
	&\phantom{xx}{}+ \sum_{j=1}^n
	a_{ij}\bigg|\int_0^T\int_\dom\na(u_j^\eps-u_j)(y,t)\cdot\phi(y,t)dydt\bigg| \\
	&\le \sum_{j=1}^n\bigg\|\int_\dom K_{ij}^\eps(\cdot-y)\phi(y)dy 
	- a_{ij}\phi\bigg\|_{L^{q'}(Q_T)}\|\na u_j^\eps\|_{L^q(Q_T)} \\
	&\phantom{xx}{}+ \sum_{j=1}^n
	a_{ij}\bigg|\int_0^T\int_\dom\na(u_j^\eps-u_j)(y,t)\cdot\phi(y,t)dydt\bigg|.
\end{align*}
Since $B$ has compact support in $\R$, we can apply the proof of 
\cite[Theorem 4.22]{Bre10} to infer that the first term on the right-hand
side, formulated as the convolution $K_{ij}^\eps*\phi-a_{ij}\phi$
(slightly abusing the notation),
converges to zero strongly in $L^{q'}(\R^d)$ as $\eps\to 0$.
Thus, taking into account the weak convergence \eqref{loc.conv1}, 
convergence \eqref{loc.nap} follows. 

Because of the convergences \eqref{loc.conv0}, \eqref{loc.conv3}, and
\eqref{loc.nap}, we can apply Lemma \ref{lem.ident} in Appendix \ref{aux} to infer
that $z_i=u_i^{1/2}\na p_i[u]$. Therefore,
$$
  u_i^\eps\na p_i^\eps[u^\eps]\rightharpoonup u_i\na p_i[u]\quad
	\mbox{weakly in }L^1(Q_T).
$$
Estimate \eqref{loc.conv3} shows
that the convergence holds in $L^q(Q_T)$. This convergence as well as 
\eqref{loc.conv1} and \eqref{loc.conv2} allow us to perform the
limit $\eps\to 0$ in the weak formulation of \eqref{1.eq}, proving that $u$
solves \eqref{1.eq} and \eqref{1.loc}. 


\begin{appendix}
\section{Auxiliary results}\label{aux}

We recall the Young convolution inequality (the proof in \cite[Theorem 4.33]{Bre10}
also applies to the torus).

\begin{lemma}[Young's convolution inequality]\label{lem.young}
Let $1\le p\le q\le\infty$ be such that $1+1/q=1/p+1/r$
and let $K\in L^r(\dom)$ (extended periodically to $\R^d$). 
Then for any $v\in L^p(\dom)$,
$$
  \bigg\|\int_\dom K(\cdot-y)v(y)dy\bigg\|_{L^q(\dom)}
	\le \|K\|_{L^r(\dom)}\|v\|_{L^p(\dom)}.
$$
\end{lemma}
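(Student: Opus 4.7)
The plan is to follow the classical proof (as in Brezis, Theorem 4.33) essentially verbatim, the only points of departure being that all integrals are taken over $\dom$ and that we use translation invariance of Lebesgue measure on the torus together with the periodic extension of $K$ to justify the change of variables $y\mapsto x-y$ inside $\int_\dom$.

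First I would dispose of the boundary cases. If $r=1$ then the hypothesis forces $p=q$, and the inequality follows from Minkowski's integral inequality:
$$
  \Big\|\int_\dom K(\cdot-y)v(y)dy\Big\|_{L^q(\dom)}
  \le \int_\dom |K(z)|\,\|v(\cdot-z)\|_{L^q(\dom)}dz
  = \|K\|_{L^1(\dom)}\|v\|_{L^q(\dom)}.
$$
Symmetrically, $p=1$ forces $r=q$, and translation invariance again gives the claim. The case $q=\infty$ forces $1/p+1/r=1$ and is handled by a direct pointwise Hölder estimate.

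For the main case $1<p,q,r<\infty$, the key device is a three-exponent Hölder inequality applied to the factorization
$$
  |K(x-y)||v(y)| = \bigl(|K(x-y)|^r|v(y)|^p\bigr)^{1/q}
  \cdot |K(x-y)|^{1-r/q}\cdot |v(y)|^{1-p/q}.
$$
With the conjugate exponents $q$, $r^*=(1/r-1/q)^{-1}$, and $p^*=(1/p-1/q)^{-1}$, the hypothesis $1+1/q=1/p+1/r$ yields exactly $1/q+1/r^*+1/p^*=1$. Hölder then gives the pointwise bound
$$
  \Big|\int_\dom K(x-y)v(y)dy\Big|
  \le \Big(\int_\dom |K(x-y)|^r|v(y)|^p dy\Big)^{1/q}
  \|K\|_{L^r(\dom)}^{1-r/q}\|v\|_{L^p(\dom)}^{1-p/q}.
$$

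Raising to the $q$-th power, integrating over $x\in\dom$, applying Fubini, and using the torus translation invariance $\int_\dom|K(x-y)|^r dx=\|K\|_{L^r(\dom)}^r$ (valid because $K$ is extended periodically), one obtains
$$
  \|K*v\|_{L^q(\dom)}^q \le \|K\|_{L^r(\dom)}^{q-r}\|v\|_{L^p(\dom)}^{q-p}
  \cdot \|K\|_{L^r(\dom)}^r\|v\|_{L^p(\dom)}^p
  = \|K\|_{L^r(\dom)}^q\|v\|_{L^p(\dom)}^q,
$$
and the desired inequality follows on taking $q$-th roots. There is no genuine obstacle in this argument; the only bookkeeping is verifying the exponent identity $1/q+1/r^*+1/p^*=1$, which is immediate from the assumption, and checking that translation invariance survives on $\dom$, which it does for periodically extended $K$.
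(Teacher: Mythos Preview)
Your proof is correct and coincides with what the paper does: the paper gives no independent argument but simply cites \cite[Theorem~4.33]{Bre10} with the remark that the proof carries over to the torus, and you have written out precisely that three-exponent H\"older argument together with the observation that translation invariance on $\dom$ (via the periodic extension of $K$) replaces the $\R^d$ change of variables.
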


The next result is a consequence of Vitali's lemma
and is well known. We recall it for the convenience of the reader.

\begin{lemma}\label{lem.conv}
Let $\Omega\subset\R^d$ $(d\ge 1)$ be a bounded domain, $1<p<\infty$, and
$u_\eps$, $u\in L^1(\Omega)$ be such that $(u_\eps)$ is bounded in $L^p(\Omega)$
and $u_\eps\to u$ a.e.\ in $\Omega$. Then $u_\eps\to u$
strongly in $L^r(\Omega)$ for all $1\le r<p$.
\end{lemma}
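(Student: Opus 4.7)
The plan is to invoke Vitali's convergence theorem applied to the sequence $\{|u_\eps - u|^r\}$. First I would note that by Fatou's lemma applied to $|u_\eps|^p$ along the a.e.\ convergent subsequence, one obtains $\|u\|_{L^p(\Omega)} \le \liminf_{\eps} \|u_\eps\|_{L^p(\Omega)} < \infty$, so that $u \in L^p(\Omega)$ and $(u_\eps - u)$ is bounded in $L^p(\Omega)$ by some constant $M$ independent of $\eps$.

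Next I would establish equi-integrability of the family $\{|u_\eps - u|^r\}$. Since $r < p$ and $\Omega$ has finite measure, Hölder's inequality with conjugate exponents $p/r > 1$ and $(p/r)/(p/r - 1) = p/(p-r)$ gives, for any measurable subset $A \subset \Omega$,
$$
\int_A |u_\eps - u|^r\,dx \le |A|^{1 - r/p} \|u_\eps - u\|_{L^p(\Omega)}^r \le M^r |A|^{1 - r/p}.
$$
Since $1 - r/p > 0$ and the bound is independent of $\eps$, the right-hand side tends to $0$ uniformly in $\eps$ as $|A| \to 0$. This is precisely equi-integrability of $\{|u_\eps - u|^r\}$ on the finite-measure space $\Omega$.

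Finally, combining the pointwise a.e.\ convergence $|u_\eps - u|^r \to 0$ with the equi-integrability established above, Vitali's convergence theorem yields $\int_\Omega |u_\eps - u|^r\,dx \to 0$, i.e.\ $u_\eps \to u$ strongly in $L^r(\Omega)$ for every $1 \le r < p$. The argument is essentially routine; the only subtlety worth flagging is the use of Fatou to ensure the limit $u$ itself lies in $L^p(\Omega)$, which is what allows the Hölder estimate to be applied to the difference $u_\eps - u$ rather than only to $u_\eps$. No genuine obstacle is expected.
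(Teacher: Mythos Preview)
Your proof is correct and follows essentially the same approach as the paper: both verify uniform integrability from the $L^p$ bound and invoke Vitali's convergence theorem. The only cosmetic difference is that the paper checks uniform integrability of $|u_\eps|^r$ via the level-set estimate $\int_{\{|u_\eps|\ge M\}}|u_\eps|^r\,dx\le M^{-(p-r)}\|u_\eps\|_{L^p}^p$, whereas you use H\"older on small sets applied to $|u_\eps-u|^r$ after first securing $u\in L^p$ via Fatou; both are standard and equivalent routes to the same Vitali conclusion.
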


\begin{proof}
We have for any $M>0$,
$$
  \int_{\{u_\eps\ge M\}}|u_\eps|^rdx 
	= \int_{\{u_\eps\ge M\}}|u_\eps|^p|u_\eps|^{-(p-r)}dx
	\le M^{-(p-r)}\int_\Omega|u_\eps|^p dx \le CM^{-(p-r)}\to 0,
$$
as $M\to\infty$. Thus, $(u_\eps)$ is uniformly integrable. Since
convergence a.e.\ implies convergence in measure, we conclude
with Vitali's convergence theorem.
\end{proof}

The following lemma specifies conditions under which the limit of the product 
of two converging sequences can be identified.

\begin{lemma}\label{lem.ident}
Let $p>1$ and let $u_\eps\ge 0$, $u_\eps\to u$ strongly in $L^p(\dom)$,
$v_\eps\rightharpoonup v$ weakly in $L^p(\dom)$, and
$u_\eps v_\eps\rightharpoonup w$ weakly in $L^p(\dom)$ as $\eps\to 0$.
Then $w=uv$.
\end{lemma}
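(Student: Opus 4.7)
The plan is to test $u_\eps v_\eps$ against a dense class of functions in $L^{p'}(\dom)$ (where $p'$ is the H\"older conjugate of $p$) and identify its weak limit. Since $u_\eps v_\eps \rightharpoonup w$ weakly in $L^p(\dom)$, the pairing $\int_\dom u_\eps v_\eps \phi\, dx$ converges to $\int_\dom w\phi\, dx$ for any $\phi \in L^{p'}(\dom)$. The goal is to show the same pairing also converges to $\int_\dom uv\phi\, dx$. The subtlety is that for $1 < p < 2$ we have $p' > p$, so $u \in L^p$ and $\phi \in L^{p'}$ do not a priori give $u\phi \in L^{p'}$, and the naive strong-weak pairing $\int_\dom u(v_\eps - v)\phi\, dx$ does not directly vanish. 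I circumvent this by a truncation argument combined with a uniform equi-integrability estimate.

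For fixed $k > 0$, introduce the truncations $u_\eps^k = \min(u_\eps, k)$ and $u^k = \min(u, k)$, which are well defined since $u_\eps \geq 0$ (hence $u \geq 0$ a.e.). Since truncation is $1$-Lipschitz, $u_\eps^k \to u^k$ strongly in $L^p(\dom)$, and since $|u_\eps^k| \leq k$, for any $\phi \in L^\infty(\dom)$ dominated convergence (applied along an a.e.\ subsequence extracted from the strong $L^p$ convergence) gives $u_\eps^k \phi \to u^k \phi$ strongly in $L^{p'}(\dom)$. Coupled with the weak convergence $v_\eps \rightharpoonup v$ in $L^p(\dom)$, this yields, for each fixed $k$,
\[
  \int_\dom u_\eps^k v_\eps \phi\, dx \to \int_\dom u^k v \phi\, dx \quad \text{as } \eps \to 0.
\]

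To pass from the truncated product to the full product, I estimate the tails uniformly in $\eps$. Using $0 \leq u_\eps - u_\eps^k \leq u_\eps \mathbf{1}_{\{u_\eps > k\}}$ and H\"older's inequality,
\[
  \bigg|\int_\dom (u_\eps - u_\eps^k) v_\eps \phi\, dx\bigg| \leq \|\phi\|_{L^\infty(\dom)}\, \|u_\eps \mathbf{1}_{\{u_\eps > k\}}\|_{L^p(\dom)}\, \|v_\eps\|_{L^p(\dom)},
\]
with $\|v_\eps\|_{L^p}$ bounded by the weak convergence. The strong convergence $u_\eps \to u$ in $L^p(\dom)$ makes $(|u_\eps|^p)$ equi-integrable; combined with the Chebyshev bound $|\{u_\eps > k\}| \leq k^{-p}\|u_\eps\|_{L^p}^p \to 0$ as $k \to \infty$ (uniformly in $\eps$), this yields $\sup_\eps \|u_\eps \mathbf{1}_{\{u_\eps > k\}}\|_{L^p(\dom)} \to 0$. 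An analogous estimate shows $\int_\dom (u - u^k) v \phi\, dx \to 0$ as $k \to \infty$, using $u \in L^p(\dom)$ and dominated convergence. Combining the truncated limit with the two tail estimates, I obtain $\int_\dom w\phi\, dx = \int_\dom uv\phi\, dx$ for every $\phi \in L^\infty(\dom)$; density of $L^\infty(\dom)$ in $L^{p'}(\dom)$ (since $p' < \infty$) then forces $w = uv$ a.e.\ in $\dom$.

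The main obstacle is the tail control, namely deriving the uniform-in-$\eps$ smallness of $\|u_\eps \mathbf{1}_{\{u_\eps > k\}}\|_{L^p}$ from the mere strong $L^p$ convergence of $u_\eps$; everything else is either bookkeeping or a standard strong-weak pairing enabled by the boundedness of the truncated sequence.
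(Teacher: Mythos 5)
Your overall architecture (truncate $u_\eps$, pass to the limit in the truncated product by a strong--weak pairing, then control the tails uniformly in $\eps$) is the same as the paper's, and the truncated-limit step is fine. The gap is in the tail estimate, which is exactly the crux of the lemma. You bound
$|\int_\dom (u_\eps-u_\eps^k)v_\eps\phi\,dx|$ by $\|\phi\|_{L^\infty}\,\|u_\eps\mathbf{1}_{\{u_\eps>k\}}\|_{L^p}\,\|v_\eps\|_{L^p}$, but this is H\"older's inequality with exponents $(p,p)$, which requires $1/p+1/p\le 1$, i.e.\ $p\ge 2$. The lemma is only nontrivial for $1<p<2$ (as the paper notes, and as it is applied in Section \ref{sec.ex}); in that regime the product of two functions that are merely bounded in $L^p$ need not even be integrable, so no amount of equi-integrability of $(|u_\eps|^p)$ alone can rescue this pairing. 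The same defect appears in your final tail estimate for $\int_\dom(u-u^k)v\phi\,dx$: dominated convergence there presupposes $uv\phi\in L^1(\dom)$, which is not known a priori from $u,v\in L^p$ with $p<2$.

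The missing ingredient is the hypothesis you never use quantitatively: $u_\eps v_\eps\rightharpoonup w$ weakly in $L^p(\dom)$ implies $\sup_\eps\|u_\eps v_\eps\|_{L^p(\dom)}<\infty$, and this uniform bound on the \emph{product} is what controls the tails. On the set $\{u_\eps>k\}$ one writes
\begin{equation*}
  u_\eps|v_\eps| \le k^{-(p-1)}\,|u_\eps|^p|v_\eps|
  \le k^{-(p-1)}\big(|u_\eps|^p+|u_\eps|^p|v_\eps|^p\big)
  = k^{-(p-1)}\big(|u_\eps|^p+|u_\eps v_\eps|^p\big),
\end{equation*}
and both terms on the right are uniformly bounded in $L^1(\dom)$, giving a tail of order $k^{-(p-1)}$ uniformly in $\eps$. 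This is precisely the paper's argument. For the limit functions, the paper likewise avoids assuming $uv\in L^1$ by showing $T_k(u)v\to w$ in $L^1$ and a.e.\ as $k\to\infty$, and then identifying $T_k(u)v=uv$ on $\{u\le k\}$ with $\operatorname{meas}\{u>k\}\to 0$. With your tail estimate replaced by this one, your proof goes through.
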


The lemma is trivial if $p\ge 2$. We apply it in Section \ref{sec.ex}
with $1<p<2$. Note that the strong convergence of $(u_\eps)$ cannot be replaced
by weak convergence. A simple counter-example is given by
$u_\eps(x)=\exp(2\pi\mathrm{i}x/\eps)\rightharpoonup 0$,
$v_\eps(x)=\exp(-2\pi\mathrm{i}x/\eps)\rightharpoonup 0$ weakly in $L^2(-1,1)$,
but $u_\eps v_\eps\equiv 1\neq 0\cdot 0$.

\begin{proof}
We define the truncation function $T_1\in C^2([0,\infty))$ satisfying
$T_1(s)=s$ for $0\le s\le 1$, $T_1(s)=2$ for $s>3$, and $T_1$ is nondecreasing
and concave in the interval $[1,3]$. 
Furthermore, we set $T_k(s)=kT_1(s/k)$ for $s\ge 0$
and $k\in\N$. The strong convergence of $(u_\eps)$ implies the existence of
a subsequence that is not relabeled such that $u_\eps\to u$ a.e. Hence,
$T_k(u_\eps)\to T_k(u)$ a.e. Since $T_k$ is bounded for fixed $k\in\N$,
we conclude by dominated convergence that $T_k(u_\eps)\to T_k(u)$ strongly
in $L^r(\dom)$ for any $r<\infty$. Because of the uniqueness of the limit,
the convergence holds for the whole sequence. 
Thus, $T_k(u_\eps)v_\eps\rightharpoonup T_k(u)v$ weakly in $L^1(\dom)$.
Writing $\overline{z_\eps}$ for the weak limit of a sequence $(z_\eps)$ (if it
exists), this result means that $\overline{T_k(u_\eps)v_\eps}=T_k(u)v$
and the assumption translates to $\overline{u_\eps v_\eps}=w$.
Consequently, $w-T_k(u)v=\overline{(u_\eps-T_k(u_\eps))v_\eps}$.
Then we can estimate
\begin{align*}
  \|w-T_k(u)v\|_{L^1(\dom)}
	&\le \sup_{0<\eps<1}\int_\dom |u_\eps-T_k(u_\eps)||v_\eps| dx
	\le C\sup_{0<\eps<1}\int_{\{|u_\eps|>k\}}|u_\eps| |v_\eps| dx \\
	&\le \frac{C}{k^{p-1}}\int_{\{|u_\eps|>k\}}|u_\eps|^p |v_\eps| dx
	\le \frac{C}{k^{p-1}}\int_\dom |u_\eps|^p(1 + v_\eps|^p)dx \le \frac{C}{k^{p-1}}.
\end{align*}
This shows that $T_k(u)v\to w$ strongly in $L^1(\dom)$ and (for a subsequence)
a.e.\ as $k\to\infty$.
Since $T_k(u)v=uv$ in $\{|u|\le k\}$ for any $k\in\N$ and
$\operatorname{meas}\{|u|>k\}\le\|u\|_{L^1(\dom)}/k\to 0$, we infer in the
limit $k\to\infty$ that $w=uv$ in $\dom$.
\end{proof}


\section{Local cross-diffusion system}\label{local}

The existence of global weak solutions to the local system \eqref{1.eq}
and \eqref{1.loc} in any bounded polygonal domain was shown in \cite{JuZu20} 
by analyzing a finite-volume scheme. For completeness, we state the assumptions and the
theorem and indicate how the result can be proved using the techniques of
Section \ref{sec.ex}.
We assume that $\Omega\subset\R^d$ ($d\ge 1$) is a bounded domain,
$T>0$, and $u^0\in L^2(\Omega)$ satisfies $u_i^0\ge 0$ in $\Omega$ 
for $i=1,\ldots,n$. We set $Q_T=\Omega\times(0,T)$.

\begin{theorem}[Existence for the local system]
Let $\sigma>0$, $a_{ij}\ge 0$, and let the matrix $(u_i a_{ij})$ 
be positively stable for all $u_i>0$, 
$i=1,\ldots,n$. Assume that there exist $\pi_1,\ldots,\pi_n>0$ such that
$\pi_i a_{ij} = \pi_j a_{ji}$ for all $i,j=1,\ldots,n$.
Then there exists a global weak solution to \eqref{1.eq} and 
\eqref{1.loc}, satisfying $u_i\ge 0$ in $Q_T$ and
\begin{align*}
  u_i\in L^2(0,T;H^1(\Omega))\cap L^{2+4/d}(Q_T), \quad 
	\pa_t u_i\in L^q(0,T;W^{-1,q}(\Omega)),
\end{align*}
for $i=1,\ldots,n$, where $q=(d+2)/(d+1)$. 
The initial datum in \eqref{1.eq} is satisfied in the sense of $W^{-1,q}(\Omega)$.
Moreover, the following entropy inequalities are satisfied:
\begin{equation}\label{eiloc}
\begin{aligned}
  & \frac{dH_1}{dt} + 4\sigma\sum_{i=1}^n\int_\Omega\pi_i|\na\sqrt{u_i}|^2 dx
	+ \alpha\sum_{i=1}^n\int_\Omega |\na u_i|^2 dx \le 0, \\
  & \frac{dH_2^0}{dt} + \sum_{i=1}^n\int_\Omega\pi_i u_i|\na p_i[u]|^2 dx
	+ \alpha\sigma\sum_{i=1}^n\int_\Omega |\na u_i|^2 dx \le 0,
\end{aligned}
\end{equation}
where $\alpha>0$ is the smallest eigenvalue of $(\pi_i a_{ij})$
and $H_2^0(u):=\frac12\sum_{i,j=1}^n\int_\Omega\pi_i a_{ij}u_i u_jdx\ge 0$.
\end{theorem}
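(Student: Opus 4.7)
The plan is to adapt the three-level approximation scheme of Section~\ref{sec.ex} to the local setting, where the analysis becomes strictly simpler because the matrix $(\pi_i a_{ij})$ is symmetric (by detailed balance) and positive definite with smallest eigenvalue $\alpha>0$, so $L^2$-gradient bounds fall out directly from the first entropy inequality. Given $u^{k-1}\in L^2(\Omega;\R^n)$ with $u_i^{k-1}\ge 0$, parameters $\tau,\delta>0$, and $m\in\N$ with $m>d/2+1$, I seek $w^k\in H^m(\Omega;\R^n)$ such that
\begin{align*}
  \frac{1}{\tau}\int_\Omega (u(w^k)-u^{k-1})\cdot\phi\,dx &+ \sigma\sum_{i=1}^n\int_\Omega \na u_i(w^k)\cdot\na\phi_i\,dx + \delta\,b(w^k,\phi) \\
  &= -\sum_{i,j=1}^n a_{ij}\int_\Omega u_i(w^k)\na u_j(w^k)\cdot\na\phi_i\,dx
\end{align*}
for all $\phi\in H^m(\Omega;\R^n)$, with $u_i(w)=\exp(w_i/\pi_i)$ guaranteeing nonnegativity and $b$ the coercive bilinear form on $H^m$ from Section~\ref{sec.ex}. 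No-flux boundary conditions are imposed, so integration by parts produces no boundary terms. A Leray--Schauder fixed-point argument on $L^\infty(\Omega;\R^n)$ provides $w^k$.

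Testing with $\phi_i=w_i^k=\pi_i\log u_i^k$ and using the convexity of $z\mapsto z(\log z-1)$ together with detailed balance gives
\begin{equation*}
  \frac{H_1(u^k)-H_1(u^{k-1})}{\tau} + 4\sigma\sum_{i=1}^n\pi_i\|\na\sqrt{u_i^k}\|_{L^2}^2 + \sum_{i,j=1}^n\pi_i a_{ij}\int_\Omega\na u_i^k\cdot\na u_j^k\,dx + \delta C\|w^k\|_{H^m}^2 \le 0.
\end{equation*}
The key point is that $\sum_{i,j}\pi_i a_{ij}\int_\Omega\na u_i^k\cdot\na u_j^k\,dx \ge \alpha\sum_i\|\na u_i^k\|_{L^2}^2$, which provides an $L^2(0,T;H^1(\Omega))$ bound on the piecewise-constant interpolant $u_i^{(\tau)}$ rather than merely on $\sqrt{u_i^{(\tau)}}$. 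With this regularity, the test function $\phi_i=\pi_i p_i[u^k]=\pi_i\sum_j a_{ij}u_j^k$ is admissible, and a parallel argument exploiting the symmetry and positive definiteness of $(\pi_i a_{ij})$ yields the second discrete entropy inequality with the additional $\alpha\sigma\sum_i\|\na u_i^k\|_{L^2}^2$ dissipation. Gagliardo--Nirenberg interpolation then delivers the $L^{2+4/d}(Q_T)$ bound, and a duality estimate gives $\pa_t u_i^{(\tau)}\in L^q(0,T;W^{-1,q}(\Omega))$ with $q=(d+2)/(d+1)$.

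For the limit passage, I first send $\delta\to 0$ at each time step exactly as in Step~2 of the proof of Theorem~\ref{thm.ex}, and then apply the discrete-time Aubin--Lions lemma of \cite{BCJ20,DrJu12} to extract a subsequence with $u_i^{(\tau)}\to u_i$ strongly in $L^2(Q_T)$, $\na u_i^{(\tau)}\rightharpoonup \na u_i$ weakly in $L^2(Q_T)$, and $\pa_t u_i^{(\tau)}\rightharpoonup \pa_t u_i$ weakly. In contrast to Section~\ref{sec.ex}, identifying the flux is now routine: since $p_i[u]=\sum_j a_{ij}u_j$ is local in $u$, strong $L^2$-convergence of $u_i^{(\tau)}$ times weak $L^2$-convergence of $\na u_j^{(\tau)}$ passes directly to the limit in $u_i^{(\tau)}\na p_i[u^{(\tau)}]=\sum_j a_{ij}u_i^{(\tau)}\na u_j^{(\tau)}$, so the compensated-compactness device of Lemma~\ref{lem.ident} is unnecessary. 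Weak lower semicontinuity of the convex entropies then produces \eqref{eiloc} in the limit.

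The main obstacle is essentially bookkeeping: one must verify that the scheme and a priori estimates transfer cleanly from the torus to the bounded domain with no-flux boundary conditions. Since the local equation generates no boundary integrals upon integration by parts (unlike the nonlocal situation discussed in Remark~\ref{rem.domain}), this poses no real difficulty, and the whole argument is a strict simplification of Section~\ref{sec.ex}, with the compensated-compactness step replaced by elementary weak--strong continuity and the entropy production giving $L^2(H^1)$-control of $u_i$ itself.
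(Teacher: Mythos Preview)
Your approach is essentially the paper's own: adapt the approximation scheme of Section~\ref{sec.ex}, exploit that in the local case the entropy production controls $\sum_i\|\na u_i\|_{L^2}^2$ directly, use Gagliardo--Nirenberg and duality for the remaining bounds, apply Aubin--Lions, and observe that the flux identification is elementary because the product $u_i^{(\tau)}\na u_j^{(\tau)}$ passes to the limit by strong-weak $L^2$ pairing.

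One step you asserted without justification is that $(\pi_i a_{ij})$ is positive definite. The theorem's hypotheses give only that $(\pi_i a_{ij})$ is symmetric (detailed balance) and that $(u_i a_{ij})$ is positively stable for every choice of $u_i>0$; neither of these alone forces positive definiteness of $(\pi_i a_{ij})$. The paper closes this gap by setting $A_1=\operatorname{diag}(u_i/\pi_i)$ and $A_2=(\pi_i a_{ij})$, noting $A_1A_2=(u_i a_{ij})$, and invoking \cite[Prop.~3]{ChJu21}: if $A_1$ is symmetric positive definite, $A_2$ is symmetric, and $A_1A_2$ is positively stable, then $A_2$ is positive definite. This is what produces $\alpha>0$ and makes your key inequality $\sum_{i,j}\pi_i a_{ij}\int_\Omega\na u_i^k\cdot\na u_j^k\,dx\ge\alpha\sum_i\|\na u_i^k\|_{L^2}^2$ legitimate. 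Once you insert this argument, your sketch matches the paper's proof.
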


We call a matrix {\em positively stable} if all eigenvalues have a positive real part.
This condition means that \eqref{1.eq} is parabolic in the sense of Petrovskii,
which is a minimal condition to ensure the local solvability \cite{Ama93}. 
Inequalities \eqref{1.H1}--\eqref{1.H2} and \eqref{eiloc} reveal a link
between the entropy structures of the nonlocal and local systems. 
This link was explored recently in detail for related systems in \cite{DiMo21}.

\begin{proof}
If $\Omega$ is the torus, the theorem is a consequence of the localization limit 
(Theorem \ref{thm.loc}). If $\Omega$ is a bounded domain,
the result can be proved by using the techniques of the proof
of Theorem \ref{thm.ex}. In fact, the proof is simpler since the problem is local.
The entropy identities are (formally)
\begin{equation}\label{app.eiloc}
\begin{aligned}
  & \frac{dH_1}{dt} + 4\sigma\sum_{i=1}^n\int_\Omega\pi_i|\na\sqrt{u_i}|^2 dx
	= -\sum_{i,j=1}^n\int_\Omega\pi_i a_{ij}\na u_i\cdot\na u_j dx, \\
  & \frac{dH_2^0}{dt} + \sum_{i=1}^n\int_\Omega\pi_i u_i|\na p_i[u]|^2 dx
	= -\sigma\sum_{i,j=1}^n\int_\Omega\pi_i a_{ij}\na u_i\cdot\na u_j dx.
\end{aligned}
\end{equation}
We claim that the matrix $(\pi_i a_{ij})$
is positive definite. Let $A_1:=\operatorname{diag}(u_i/\pi_i)$ and
$A_2:=(\pi_i a_{ij})$. Then $A_1$ is symmetric and positive definite;
by our assumptions, $A_2$ is symmetric and 
$A_1A_2=(u_i a_{ij})$ is positively stable. 
Therefore, by \cite[Prop.~3]{ChJu21}, $A_2$ is positive definite. We infer that
the right-hand sides in \eqref{app.eiloc} are nonpositive, and we derive
estimates for an approximate family of $u_i$ in $L^\infty(0,T;L^2(\Omega))$
and $L^2(0,T;H^1(\Omega))$. By the Gagliardo--Nirenberg inequality, this yields
bounds for $u_i$ in $L^{2+4/d}(Q_T)$. Consequently, $u_i\na p_i[u]$ is bounded
in $L^q(Q_T)$, where $q=(d+2)/(d+1)$ (we can even choose $q=4(d+2)/(3d+4)$),
and $\pa_t u_i$ is bounded in $L^q(0,T;W^{-1,q}(\Omega))$. These estimates
are sufficient to deduce from the Aubin--Lions lemma the
relative compactness for the approximate family of $u_i$ in $L^2(Q_T)$. 
The limit in the approximate problem, similar to \eqref{ex.approx}, shows that 
the limit satisfies \eqref{1.eq} and \eqref{1.loc}.
Finally, using the lower semicontinuity of convex functions and the
norm, the weak limit in the entropy inequalities leads to \eqref{eiloc}.
\end{proof}

\end{appendix}


\end{document}